\theoremstyle{definition}
\newtheorem{axiom}{Axiom}\numberwithin{axiom}{section}
\newtheorem*{axiom*}{Axiom}
\newtheorem{defn}[axiom]{Definition}
\newtheorem*{defn*}{Definition}
\newtheorem*{example*}{Example}
\newtheorem*{conjecture*}{Conjecture}
\newtheorem*{question*}{Question}
\newtheorem*{problem*}{Problem}
\theoremstyle{remark}
\newtheorem{remark}[axiom]{Remark}
\newtheorem*{remark*}{Remark}
\theoremstyle{plain}
\newtheorem{theorem}[axiom]{Theorem}
\newtheorem*{theorem*}{Theorem}
\newtheorem{lemma}[axiom]{Lemma}
\newtheorem*{lemma*}{Lemma}
\newtheorem{proposition}[axiom]{Proposition}
\newtheorem*{proposition*}{Proposition}
\newtheorem{corollary}[axiom]{Corollary}
\newtheorem*{corollary*}{Corollary}
\theoremstyle{definition}
\newtheorem{noname}[axiom]{\mbox{ }}
\theoremstyle{plain}
\newtheorem{claim}[axiom]{Claim}
\newcommand{\R}{{\mathbb R}}
\newcommand{\N}{{\mathbb N}}
\newcommand{\C}{{\mathbb C}}
\newcommand{\Z}{{\mathbb Z}}
\newcommand{\F}{{\mathcal F}}
\newcommand{\cG}{{\mathcal G}}
\newcommand{\cH}{{\mathcal H}}
\newcommand{\constdef}{$K\cup(\R^n\setminus (-2,2)^n)$}
\newcommand{\frp}{ foliated $\R^{n-2}$-product }
\newcommand{\frps}{ foliated $\R^{n-2}$-products }
\newcommand{\DR}{ \mathrm{Diff}_c\R^{n-2}}
\newcommand{\DRk}{ \mathrm{Diff}_c\R^{k}}
\newcommand{\CRk}{C^\infty_c(\R^k,\R^k)}
\newcommand{\cW}{\mathcal{W}}
\begin{document}

\begin{center}

{\Large 
{Thurston's h-principle for 2-dimensional 
Foliations \\
of Codimension Greater than One}}
\vspace{10pt}
\\
Yoshihiko MITSUMATSU\footnote
{This research is supported in part by the 
Grant-in-Aid for Scientific Research 
(No.\ 18340020), the Ministry of Education, Culture, 
Sports, Science and Technology, Japan.  }
and 
Elmar VOGT \footnote
{An essential portion of the research for this paper was carried out 
during two stays of the second author 
 in Tokyo, the first in 2006-2007 as a visiting professor at the  University of Tokyo,
the second in 2013 at Chuo University.  
\\
{2010 {\it Mathematics Subject Classification}. 
Primary \, 57R30, 57R32; Secondary \, 58D05,
\\
{\it Key words and phrases}. 
{$2$-plane fields, integrability of plane fields, 
foliated bundles, diffeomorphism groups made discrete}
}
}
\vspace{10pt}
\\
{\it  Dedicated to Steven Hurder and Takashi Tsuboi 
on the occasion of their 60th birthdays}
\end{center}



\begin{abstract}
We recreate an unpublished proof of William Thurston from the early 1970's that any smooth $2$-plane field on a manifold of dimension at least $4$ is homotopic to the tangent plane field of a foliation.
  
\end{abstract}



\section{Introduction}\label{sect:intro}
The main purpose of this article is to write down in reasonable detail a proof of a theorem of Thurston which many experts know of and of whose existence many more are aware. The proof is due to Thurston but was never written up since it was superseded by a to some extent more elementary proof of a substantially more general theorem. Thus the theorem below became a mere corollary in Thurston's published work: Corollary 3 in \cite{Thur}. Here it becomes the main result we want to prove.

\begin{theorem}[Thurston]\label{thm:main} Any $C^\infty$ $2$-plane field $\tau$ on a manifold $M$ of dimension at least 4 is homotopic to an integrable one. If $\tau$ is already integrable in a neighborhood of a closed set $K\subset M$, then the homotopy can be chosen to be constant in a neighborhood of $K$.
\end{theorem}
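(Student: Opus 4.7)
My plan is to reconstruct Thurston's unpublished argument, which combines a cell-by-cell construction of the foliation via foliated $\R^{n-2}$-products with deep results on the discretized diffeomorphism group $\DR$.

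I would begin by fixing a smooth triangulation of $M$ for which $K$ is a subcomplex, and arranging by a generic perturbation that $\tau$ is suitably transverse to the skeleta. Near each top-dimensional simplex I would apply the Phillips--Gromov $h$-principle to choose a submersion to $\R^{n-2}$ whose kernel plane field is $C^0$-close to $\tau$; the codimension hypothesis $n-2\geq 2$ is precisely what enables this step. This yields local models in which $\tau$ is approximately the tangent plane field of a trivial horizontal foliation, so that the global integration problem becomes one of gluing local foliations consistently.

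The heart of the argument is an induction on skeleta, with the nontrivial step occurring at the $2$-skeleton. Given a 2-cell $D$ together with an integrable plane field already constructed on a neighborhood of $\partial D$, extending integrably across $D$ amounts to filling in a loop in $\DR$ (the boundary holonomy) by a foliated disk, \IE producing a \frp over $D$ whose boundary holonomy realizes the prescribed loop. Here one invokes Mather's theorem that $\DR$ made discrete is perfect, together with Thurston's homological refinement, to construct an explicit filling by composing commutators of compactly supported diffeomorphisms with controlled supports. Extensions over cells of dimension $\geq 3$ are then essentially automatic: since the leaves we are building are $2$-dimensional, a neighborhood of each higher-dimensional cell can be foliated by taking the product of the foliation already constructed on the $2$-skeleton with the transverse directions supplied by the submersions fixed at the start.

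The main obstacle I expect is technical rather than conceptual: one must assemble the local, piecewise constructions into a genuinely $C^\infty$ global foliation, control the accumulated homotopies on the overlap regions between charts, and throughout keep everything constant on a prescribed neighborhood of $K$. This demands careful collar choices, partition-of-unity smoothings, and a uniform approximation scheme to pass from $C^0$-proximity of plane fields on each chart to a smooth foliation whose tangent plane field is homotopic to $\tau$. The relative statement then follows by choosing the triangulation so that $K$ is a subcomplex and by performing every homotopy inside a fixed collar of $K$, never disturbing the integrable structure that is already present there.
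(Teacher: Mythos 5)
Your outline correctly identifies the key ingredients --- foliated $\R^{n-2}$-products, filling loops in $\mathrm{Diff}_c\R^{n-2}$ by the Reeb-turbulization construction, and perfectness/simplicity of $\mathrm{Diff}_c\R^{n-2}$ --- but the skeletal structure of your induction is backwards, and this is a genuine gap, not a technicality.

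You locate the obstruction at the $2$-skeleton of a triangulation of $M$, claim that filling a $2$-cell is the loop-in-$\DR$ problem, and assert that extensions over cells of dimension $\geq 3$ are ``essentially automatic.'' Neither claim holds. In the actual argument one jiggles a triangulation into general position with respect to $\tau$ (Thurston's Jiggling Lemma) and then \emph{civilizes} $\tau$: a purely geometric, tube-by-tube adjustment that makes $\tau$ constant along the fibres of compatible tubular neighborhoods of the simplices, proceeding by induction on the dimension of the skeleton. This step makes $\tau$ integrable in a neighborhood of the entire $(n-1)$-skeleton and encounters no fundamental-group obstruction at any skeleton level --- at each stage one is just averaging/straightening a plane field inside a collar, which is a contractible problem. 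The genuinely hard step (``filling the holes'') occurs only in the interiors of the $n$-simplices. There, general position and civilization provide a product structure $D^2\times D^{n-2}$ inside each $n$-simplex, adapted to $\tau$, where the $D^2$ factor is a fibre of the orthogonal projection $p_x:\R^n\to\tau(x)^\perp$ restricted to the simplex --- that is, a small transversal roughly aligned with $\tau$ --- and the $D^{n-2}$ factor is the image of the projection. The restriction of $\tau$ to $\partial D^2\times D^{n-2}$ is then a foliated $D^{n-2}$-product over $S^1$, i.e.\ a loop in $\DR$. So the $D^2$ whose boundary holonomy you need to fill is \emph{not} a $2$-cell of the triangulation; it is a disc living inside an $n$-cell, and its existence is exactly what the $2$-dimensionality of $\tau$ buys you.

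Your assertion that higher-dimensional cells are automatic, because ``the leaves we are building are $2$-dimensional,'' has no basis: a $2$-dimensional foliation still lives on all of $M$, and extending a given germ of foliation over a $3$-cell (or $n$-cell) with prescribed boundary data is precisely the nontrivial extension problem — in obstruction-theoretic terms it is governed by $\pi_{k+1}(B\overline{\Gamma}_k)$ with $k=n-2$, which places the difficulty at the top dimension, not at dimension $2$. Finally, the Gromov--Phillips theorem is not used in the proof of the main theorem at all (it appears only in the application deducing $\pi_{k+1}B\overline{\Gamma}_k=0$ from the main theorem); the local submersions come for free from general position, not from an $h$-principle.
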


The proof of Theorem \ref{thm:main} which we give in this paper we learned from Takashi Tsuboi who in turn had learned it from Andr\'e Haefliger, and Haefliger attributes it to William Thurston. Thurston remarks in \cite{Thur} that a ``proof of Corollary 3 by itself was the starting point leading to'' his main theorem in \cite{Thur}.

\begin{remark} Apart from salvaging some of  Thurston's ideas there is one further aspect which makes it worthwhile  to make this proof accessible. We are able to work entirely with plane fields on the given manifold. This is special to $2$-plane fields. With today's knowledge about $\mathrm{Diff}_c(\R^{n-k})$ our proof below shows that for $k\geq 2$ any smooth $k$-plane field on an $n$-manifold can be homotoped into one which is integrable in a neighborhood of the $(n-k+2)$-skeleton of some very fine triangulation. Thurston's proof in \cite{Thur} deals with plane fields on the product of the total space of the normal bundle of the original plane field with the unit interval. This pushes the dimension of the manifold on which one works up quite a bit, especially for 2-plane fields. 

For $2$-plane fields on $4$- and $5$- manifolds, or more generally on $n$-manifolds for those $n$ where the identity component of $\mathrm{Diff}_c(\R^{n-2})$ is simply connected, the foliations obtained after deforming the plane field have a particularly transparent description, as we will see in section \ref{sect:hole1}. 
\end{remark}

Theorem \ref{thm:main} is an easy consequence of 
\begin{proposition}\label{prop:main}
Let $\tau$ be a $C^\infty$ $2$-plane field of $\R^n$ which is integrable in a neighborhood of a closed subset $K$ of $\R^n$. Then  $\tau$ is homotopic to a $C^\infty$ $2$-plane field which is integrable in a neighborhood of $K\cup [-1,1]^n$. The homotopy can be chosen to be constant in a neighborhood of $K\cup (\R^n\setminus (-2,2)^n)$.
\end{proposition}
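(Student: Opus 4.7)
The plan is to reduce the proposition to a skeleton-by-skeleton extension over a sufficiently fine triangulation $T$ of $[-1,1]^n$ (thickened slightly into $(-2,2)^n$) that is compatible with an open neighborhood $N$ of $K$ on which $\tau$ is already integrable. First, applying a jiggling-type perturbation of $\tau$ supported strictly inside $(-2,2)^n\setminus K$, I would arrange that $\tau$ is so nearly constant on each closed simplex $\sigma\not\subset N$ that, after a further small locally supported homotopy, $\tau$ coincides in a coordinate neighborhood of every vertex $v\notin N$ with the tangent plane field of the horizontal product foliation of $\R^2\times\R^{n-2}$.

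Next I would extend inductively. Suppose $\tau$ has already been homotoped (rel a neighborhood of $K\cup(\R^n\setminus(-2,2)^n)$) so that it is integrable on an open neighborhood of $T^{(k)}\cup N$. For each $(k+1)$-simplex $\sigma$, the existing foliation on a collar of $\partial\sigma$ can be described, after choosing a coordinate tube $\sigma\times\R^{n-2}$, as the image of the horizontal foliation $\sigma\times(\R^2\times\{*\})$ under a family of compactly supported diffeomorphisms of $\R^{n-2}$ parameterised by that collar of $\partial\sigma\simeq S^k$. Extending the foliation across the interior of $\sigma$ is then equivalent to extending this map $S^k\to\DR$ to a map $D^{k+1}\to\DR$, modulo a further small homotopy of $\tau$ concentrated in an arbitrarily small neighborhood of $\sigma$.

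For $k+1$ below the connectivity of the identity component of $\DR$ this extension is automatic, which handles all simplices up to the top two dimensions in a straightforward way. The main obstacle is filling the top-dimensional cells: there the extension represents a potentially nontrivial class in $\pi_{n-1}(\DR)$, and linear interpolation in the space of $2$-plane fields will not produce an integrable deformation. To overcome this I would invoke the explicit hole-filling construction carried out in section~\ref{sect:hole1}, which produces, for any prescribed germ along $\partial\sigma$, a concrete codimension-$(n-2)$ foliation on a neighborhood of $\sigma$ whose tangent plane field extends the given $\tau$ and whose modification of $\tau$ is supported inside $(-2,2)^n\setminus K$. Once every simplex has been handled in this way the resulting plane field is integrable on a neighborhood of $K\cup[-1,1]^n$ and the homotopy is constant in a neighborhood of $K\cup(\R^n\setminus(-2,2)^n)$, as required.
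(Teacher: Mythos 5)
Your proposal casts the skeleton-by-skeleton extension entirely as an obstruction-theory problem in $\DR$, and this framework is wrong at both ends.

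For the lower skeleta, the paper's civilization step (Section~\ref{sect:civilization}) does not rely on any connectivity of $\DR$; instead it directly deforms $\tau$ to be constant along the fibres of compatible tubular neighborhoods of the simplices, which forces integrability near the $(n-1)$-skeleton. This is a local geometric straightening, carried out inductively on the dimension of the skeleton, and it requires no homotopy-group hypothesis whatsoever --- which is essential, since $\DR_0$ is not $(n-2)$-connected for general $n$, so your claim that the extension over simplices ``up to the top two dimensions'' is automatic would fail. Moreover, for a $(k+1)$-simplex $\sigma$ with $k+1<n$ there is no natural coordinate tube of the form $\sigma\times\R^{n-2}$ (the dimensions match only when $k=1$), so describing the foliation near $\partial\sigma$ by a map $S^k\to\DR$ is not even well-posed.

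For the top cells the obstruction is not a class in $\pi_{n-1}(\DR)$. After civilization each $n$-simplex acquires a $D^2\times D^{n-2}$ product structure in which $\tau$ restricted to $\partial D^2\times D^{n-2}$ is a foliated $\R^{n-2}$-product over $S^1$, i.e.\ a smooth periodic \emph{loop} in $\DR$, not a map $S^{n-1}\to\DR$. Furthermore the filling is a genuine two-dimensional foliation on $D^2\times D^{n-2}$ which in general is \emph{not} a foliated product over $D^2$ --- Thurston's model (Subsection~\ref{ThurExample}) inserts Reeb components --- so fillability is not equivalent to nullhomotopy of the loop in $\DR$. The paper instead shows that fillable loops are closed under concatenation, homotopy, and conjugation, exhibits Thurston's explicit fillable loop, and invokes perfectness and simplicity of $\DR_0$ (and, when $\DR_0$ is not simply connected, the smooth uniform perfectness of Haller--Rybicki--Teichmann) rather than any connectivity hypothesis.
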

\begin{proof}[Proof of \textup{Theorem \ref{thm:main}}]
Choose a countable atlas $\{h_i:U_i\to \R^n\}_{i\in \N}$ for $M$ such that $\{h_i^{-1}((-1,1)^n)\}_{i\in \N}$ is a covering of $M$. Start with $K':= h_1(U_1\cap K)\subset \R^n$ and $\tau':=h_{1*}(\tau_{| U})$ as input for Proposition \ref{prop:main}. The output is a plane field $\tau_1'$ whose pull back to $U_1$ can be extended by $\tau$ to a plane field $\tau_1$ which is integrable in a neighborhood of $K\cup h_1^{-1}[-1,1]^n$ by  a homotopy which is constant in a neighborhood of $K\cup h_1^{-1}(\R^n\setminus (-2,2)^n)=:K_1$.

Continue by replacing $K$, $\tau$, and $h_1$ by $K_1$,$\tau_1$, and $h_2$. If $x\in \bigcup\limits_{i=1}^N h_i^{-1}((-1,1)^n)$ then there is a neighborhood $V$ of $x$ such that all homotopies after step $N$ are constant on $V$.  If furthermore $x$  is a point of $K$ then, if $V$ is small enough, also the homotopies of steps $1$ to $N$ are constant on $V$. Thus the sequence of plane fields $\tau, \tau_1, \tau_2,\ldots $ converges to a plane field having the desired properties.
\hfill \vspace{5pt} 
\end{proof}

The proof of Proposition \ref{prop:main} proceeds in three steps. These steps also appear in Thurston's paper \cite{Thur}. They are: 
\begin{itemize}
\item[(i)]Find a triangulation of $\R^n$ which is in general position with regard to $\tau$ in a neighborhood of $[-1,1]^n$
\end{itemize}
We recall Thurston's definition of general position in the following section. Step (i) corresponds to the Jiggling Lemma in \cite{Thur}.
\begin{itemize}
\item[(ii)] Deform the plane field $\tau$ into one which is civilized near $[-1,1]^n$ with respect to the triangulation of step (i).
\end{itemize}
This will be done in Section \ref{sect:civilization}. Roughly a plane field is civilized near some compact set $C$ in $\R^n$ with respect to a triangulation of some compact neighborhood of $C$ if the plane field is constant  along and tangential to the fibres of tubular neighbourhoods of the simplices satisfying some compatibility condition (see conditions \ref{normbdle1}, \ref{constonfibre1}, and \ref{compatible1} in Section \ref{sect:civilization}).

Civilization in Thurston's paper is  important to ensure that homotopies of plane fields were performed in the space of plane fields which are in general position. For us it is important for understanding  the plane field near the $(n-1)$-skeleton when we deal with the third and final step:
\begin{itemize}
\item[(iii)] Filling the holes.
\end{itemize}
A civilized $2$-plane  field $\tau$ is integrable, i. e. a foliation, in a neighborhood of the $(n-1)$-skeleton. Filling the holes  is the extension of this foliation to the interiors of the $n$-simplices, the holes, in such a way, that the resulting tangent plane field is homotopic to $\tau$, where as always the homotopy is constant in a neighborhood of $K\cup (\R^n\setminus (-2,2)^n)$. 

This is the most difficult step. The corresponding step in \cite{Thur}, Section 4, is easier since the use of a collapsible triangulation on the product of the total space $E$ of the normal bundle of $\tau$ with the unit interval $[0,1]$ allows  enough control over the boundary of the hole to make this step quite simple.  The reverse process of a simplicial collapse, called inflation in \cite{Thur}, starts at $E\times \{0\}$ and one needs a foliation there to start with. This foliation is provided by the hypothesis of the main theorem in \cite{Thur} that the normal bundle of $\tau$ is the normal bundle of a Haefliger structure, a necessary hypothesis, if one wants to be able to homotope $\tau$ to a foliation. For $2$-plane bundles this hypothesis is satisfied by results of Thurston, Mather, and Epstein \cite{ThurBull}, \cite{Ma:integr}, \cite{Ma:comm}, \cite{Ma:Cime}, \cite{Ma:curious}, \cite{Ep:smooth}.

When working directly on the manifold, as we do here, the fact that our plane field is civilized gives us after moving a little to the inside of  the $n$-simplices  a product structure $D^2\times D^{n-2}$ for  each  $n$-simplex such that the plane field is transverse to the $D^{n-2}$ factor and  parallel to the $D^2$ factor near $D^2\times \partial D^{n-2}$. Thus the restriction to $\partial D^2\times D^{n-2}$ is a foliated $D^{n-2}$-product over $S^1=\partial D^2$ with support in the interior of $D^{n-2}$.  We have to show that we can extend any such foliated product to a $2$-dimensional foliation on all of $D^2\times D^{n-2}$. Furthermore we want the plane field of this foliation to be  homotopic to one which is transverse to the $D^{n-2}$-factor by a homotopy which is constant near the boundary. Since $\tau$ is transverse to the $D^{n-2}$ factor, the plane field of the foliation will be homotopic to it by a homotopy which is constant near the boundary of the simplex.

Foliated $D^{n-2}$-products with support in the interior of $D^{n-2}$ are the same as foliated $\R^{n-2}$-products with compact support. These correspond up to fitting beginnings and ends to smooth paths in $\mathrm{Diff}_c\R^{n-2}$ beginning at the identity. Thurston in \cite{Thur}, Section 4, provides for $n\geq 4$ an explicit and easy to understand  filling for a non-trivial path.  
It is fairly easy to see that a path homotopic to one which can be filled can itself be filled. Also the concatenation of two paths can be filled if the individual paths could be. Obviously, the conjugate of a fillable path by an element of $\mathrm{Diff}_c\R^{n-2}$ is also fillable.    Assume now for simplicity that the identity path component of $\mathrm{Diff}_c\R^{n-2}$ is simply connected. Then homotopy classes of paths correspond bijectively to their end points. Since by results of Epstein \cite{Ep:simple} and of Epstein, Mather and Thurston cited above the identity path component of $\mathrm{Diff}_c\R^{n-2}$ is simple, we are done. In fact, using Theorem 2.2 and Lemma 3.1 of \cite{Tsu:unifsimp} we see that any path is homotopic to the concatenation of $8$ conjugates of Thurston's path in \cite{Thur}, Section 4. Thus one can practically see how each hole is filled.

The situation where the identity path component of $\mathrm{Diff}_c\R^{n-2}$ is not simply connected is more complicated. It is here where, as far as we know, a directly applicable theorem with a reasonably detailed published proof is unavailable. In a note at the bottom of page 226 in \cite{Thur} Thurston formulates  a theorem of him which would allow us to proceed along the lines described in the previous paragraph. But we have not found any place in the literature where this theorem is stated as a theorem. 

Nevertheless, when looking at the experts' papers one finds enough results and arguments to piece together a result that satisfies our needs. In particular,  the arguments used in the very last paragraph in \cite{Ep:smooth}, where Epstein shows that the universal covering group of $\mathrm{Diff}^\infty_c(M)_0$ is perfect, together with a good understanding of what is done before in \cite{Ep:smooth}, and also using Tsuboi's lemma in \cite{Tsu:unifsimp},  one can achieve our goal. 

We will go a slightly different way. Instead of \cite{Ep:smooth} we will use Proposition 2 in \cite{HalRyb}. This result is (reasonably) straightforward to state and  makes it easier to describe the necessary estimates for our arguments. The only drawback (for some it might be an advantage) is  that the authors of \cite{HalRyb} use a topology on $\mathrm{Diff}_c\R^{n-2}$ developed in   \cite{KriegMich} to do analysis on infinite dimensional manifolds modelled on locally compact vector spaces, which might be unfamiliar to some. But what we actually need is very little and will be explained in Subsection \ref{subsec:smoothperf}. We will again make use of Proposition 3.1 of \cite{Tsu:unifsimp}. In fact it is important to see its proof which we, in Subsection \ref{subsec:perfecttosimple}, will more or less verbally copy directly from \cite{Tsu:unifsimp}.

\begin{remark}
Corollary 3 in \cite{Thur}  (our Theorem \ref{thm:main}) follows from the main result there using  the fact that  the classifying space $B\overline{\Gamma}_k$ of  codimension $k$ Haefliger structures with trivialized normal bundle  is $(k+1)$-connected. This is a consequence of two theorems. The Thurston-Mather Theorem, which establishes an isomorphism   between the homology of the $k$-fold loop space of $B\overline{\Gamma}_k$ and the homology of the classifying space of the homotopy fibre $\overline{\mathrm{Diff}}_c\R^k$ of $\mathrm{Diff}_c^\delta\R^{k} \to \mathrm{Diff}_c\R^{k}$ where  $\mathrm{Diff}_c^\delta\R^{k} $  is $\mathrm{Diff}_c\R^{k}$ made discrete. The second theorem, for $0<r<\infty$, $r\not=k+1$, due to Mather  \cite{Ma:comm}, see also the appendix of \cite{Tsu:conn} for a somewhat different proof,  and for $r=\infty$  due to Thurston \cite{ThurBull}, states that the universal cover of the identity path component of $\mathrm{Diff}^r_c\R^{k}$ is perfect. Thurston's proof is outlined in \S 3 of \cite{Ma:curious}. A detailed proof for the case $r=\infty$ along the lines developed by Mather for $r>k+1$  in \cite{Ma:comm} can be found in the paper \cite{Ep:smooth} by David Epstein mentioned above. 

In the final section we will go the opposite  way: following Haefliger's proof in \cite{Hae:ouvert} (see also \cite{Hae:integrability}) that $\pi_k(B\overline{\Gamma}_k)=0$ we will use our main theorem to show that $\pi_{k+1}(B\overline{\Gamma}_k)=0$ for $k\geq2$. As in \cite{Hae:ouvert} this proof involves the Gromov-Phillips transversality theorem \cite{Grom}, \cite{Phil}. But note that  our proof of Proposition \ref{prop:main}   uses  the second of the two theorems mentioned above, the one about the perfectness of the universal cover of the identity component of  $\mathrm{Diff}^r_c\R^{k}$, in the version proved by Haller, Rybicki, and Teichmann in \cite{HalRyb}.
\end{remark}

\section{Triangulations in general position with respect to a plane field}\label{sect:genposition}
Let $1\leq k\leq n-1$ and let $\tau$ be a $C^1$ $k$-plane field on $\R^n$. The following terminology is due to Thurston \cite{Thur}.
\begin{defn}\label{def:genpos} An $n$-simplex $\sigma$ of $\R^n$ is in {\it general position with respect to $\tau$} if for every $x\in \sigma$ the orthogonal projection of $\R^n$ onto the $(n-k)$-plane orthogonal to $\tau(x)$ is injective on every $(n-k)$-face of $\sigma$. A triangulation of $\R^n$ by affine simplices is in {\it general position with respect to $\tau$ in a neighborhood of a closed subset $K\in\R^n$} if every $n$-simplex intersecting $K$ is in general position with respect to $\tau$.
\end{defn}
\begin{defn}\label{def:epsjiggle} Let $T$ be an affine triangulation of $\R^n$ and $\varepsilon>0$. An $\varepsilon$-jiggling of $T$ is an affine triangulation $T'$ of $\R^n$ such that there is a simplicial isomorphism $\phi: T\to T'$ with $\|\phi(v)-v\|<\epsilon$ for every vertex $v\in T$.
\end{defn}
In section 6 of \cite{Thur} the following statement is proved:
\begin{proposition} \label{prop:jiggle}Let $\tau$ be a $C^1$ $k$-plane field of $\R^n$, $1\leq k\leq n-1$, let $K\subset\R^n$ be compact, and let $\varepsilon>0$. Then there exist an $L\in \N$ and for every $l\geq L$ an $\varepsilon$-jiggling of the standard triangulation associated to the cubical lattice $(\frac{1}{l}\Z)^n\subset \R^n$ which is in general position with regard to $\tau$ in a neighborhood of $K$.
\end{proposition}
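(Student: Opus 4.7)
The plan is to seek the required jiggling among perturbations of the vertices of the scale-$1/l$ standard triangulation $T_l$ by vectors of norm at most $c_0/l$, where $c_0>0$ is a small fixed constant chosen so that the perturbed simplices remain non-degenerate. For $l\geq c_0/\varepsilon$ every such perturbation is automatically an $\varepsilon$-jiggling. Rescaling by $l$, this becomes the question of finding a $c_0$-perturbation of the unit-scale Kuhn triangulation $T_1$ with certain transversality properties, at a perturbation scale that is $l$-independent.

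First I reduce the general-position condition to a simple family of transversality inequalities. Enlarge $K$ to a compact neighborhood $\tilde K$ containing every $n$-simplex of $T_l$ meeting $K$. Since $\tau$ is $C^1$ on $\tilde K$, the map $\tau:\tilde K\to Gr(k,n)$ has modulus of continuity $\omega_\tau$ with $\omega_\tau(d)\to 0$ as $d\to 0$. Fix a distinguished vertex $v_\sigma$ for each simplex $\sigma$ meeting $K$. Then continuity of the angle in the Grassmannian implies that to establish general position of $\sigma$ it suffices to arrange
\[
\angle(V_F,\tau(v_\sigma))>\omega_\tau(\mathrm{diam}\,\sigma)
\]
for every $(n-k)$-face $F$ of $\sigma$, where $V_F$ denotes the affine tangent space of $F$.

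Second I construct such a jiggling with an $l$-independent lower bound on these angles. The tangent space $V_F(\mathbf{j})$ of each $(n-k)$-face is a polynomial function of the at most $n-k+1$ vertex perturbations involved, and failure of transversality to $\tau(v_\sigma)$ is cut out by the vanishing of a single determinant polynomial. For each pair $(\sigma,F)$ the locus of bad jigglings is therefore a proper algebraic subvariety of the jiggling parameter space. Since each vertex lies in only $O(1)$ simplices of $T_l$, the constraints are local. After rescaling by $l$, each ``local problem'' becomes a $c_0$-perturbation question at a vertex of the unit-scale $T_1$ (of which there are finitely many combinatorial types) with adversary planes in the compact set $\tau(\tilde K)\subset Gr(k,n)$. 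Continuity and compactness then yield a perturbation scheme $\delta_v=(c_0/l)\,\psi(\tau(v),\mathrm{type}(v))$ for a suitably chosen continuous function $\psi$, giving a uniform angle lower bound $\delta_0>0$ depending only on $c_0$, the combinatorial structure of $T_1$, and the compact set $\tau(\tilde K)$.

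Finally, choose $L$ so large that $\omega_\tau(C/l)<\delta_0$ for all $l\geq L$, where $C$ bounds the diameter of simplices in $T_l$ by $C/l$. The main obstacle is producing the uniform angle bound $\delta_0>0$: a naive union bound over the $\Theta(l^n)$ faces near $K$ is too weak since the measure of bad perturbations per face is only linear in the desired angle. The resolution exploits (i) locality of each transversality constraint, involving only $n-k+1$ vertex perturbations, (ii) rescaling invariance reducing the problem to the fixed unit-scale Kuhn triangulation, and (iii) compactness of $\tau(\tilde K)\subset Gr(k,n)$, allowing a $\tau$-dependent local perturbation scheme with $l$-independent angle bound.
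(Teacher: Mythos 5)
The paper does not actually present a proof of Proposition~\ref{prop:jiggle}; it refers the reader to Section~6 of Thurston's paper \cite{Thur} and explicitly declines to reproduce the argument. So the comparison here is with Thurston's original jiggling lemma, not with an in-paper proof.

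Your high-level structure is sound: rescale to a $c_0$-perturbation question on the unit-scale Kuhn triangulation, use the $C^1$ modulus of continuity of $\tau$ so that general position of an $n$-simplex $\sigma$ reduces to a transversality inequality against the single plane $\tau(v_\sigma)$ with a margin that tends to $0$ as $l\to\infty$, and then observe that the whole lemma follows once one has an $l$-independent angle bound $\delta_0>0$. You also correctly flag that a naive measure/union-bound argument fails. The problem is that the step where you actually produce the uniform $\delta_0$ is not an argument but an assertion: ``Continuity and compactness then yield a perturbation scheme $\delta_v=(c_0/l)\,\psi(\tau(v),\mathrm{type}(v))$ for a suitably chosen continuous function $\psi$.'' This is precisely where the real work lies, and it is not obvious that such a continuous $\psi$ exists. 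For a fixed plane $P$, the set of jiggling parameters putting all $(n-k)$-faces at angle $\ge\delta_0$ from $P$ is the complement of a $\delta_0$-neighborhood of a finite union of real algebraic hypersurfaces in the parameter ball. This set is open and nonempty (for small $\delta_0$), but it need not be connected, let alone convex or contractible, so neither a partition-of-unity average of local choices nor a generic selection theorem hands you a continuous section $P\mapsto\psi(P)$. Indeed, a single fixed jiggling cannot work for all $P$ (an $(n-k)$-plane always fails to be transverse to some $k$-plane), so $\psi$ must genuinely vary with $P$; but making it vary continuously, compatibly with the fact that one vertex perturbation enters the transversality constraints of all incident simplices, is exactly the content of Thurston's lemma. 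As you have written it, the proposal identifies the obstacle and names three true facts (locality of each constraint, scale invariance, compactness of $\tau(\tilde K)$) without showing how they overcome it, so there is a genuine gap. If you want to salvage this route, you need either to prove that a continuous (or at least ``coherent enough'') selection $\psi$ exists --- perhaps by enlarging the perturbation parameter space so that the good sets become connected, or by an explicit construction adapted to the Kuhn triangulation --- or to adopt Thurston's argument from \cite{Thur}, Section~6, which sidesteps the selection problem.
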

Recall that the standard triangulation of the unit cube $I^n=[0,1]^n$ of $\R^n$ has the vertices of $I^n$ as vertices and for each permutation $\sigma\in S_n$ an $n$-simplex $\langle v_0,\ldots, v_n\rangle$ with $v_0=(0,\ldots, 0)$ and $v_i$ being obtained from $v_{i-1}$ by replacing the $0$ in the $\sigma(i)$-th coordinate by a $1$. Triangulating every cube $x+I^n$, $x\in \Z^n$ by translating the triangulation of $I^n$ by $x$ gives a triangulation of $\R^n$. The standard triangulation associated to the lattice  $(\frac{1}{l}\Z)^n$ is obtained from this by multiplying every vertex by $1/l$.

Thurston proves a more general result, but for us Proposition \ref{prop:jiggle} suffices. His proof is concise, clear, and to the point. There are a couple of misprints, but of the ``self correcting'' type. So there is no reason to repeat it here.

We will apply Proposition \ref{prop:jiggle} in our proof of Proposition   \ref{prop:main} for the given plane field $\tau$ and $K=[-2,2]^n$.

\section{Civilization}\label{sect:civilization}
Given $K\subset \R^n$ closed and $\tau$ a smooth $2$-plane field which is integrable in a neighborhood $U$ of $K$, let $T$ be an $\varepsilon$-jiggling of the standard triangulation associated to the lattice $(\frac{1}{l}\Z)^n$ which is in general position with respect to $\tau$ in a neighborhood of $[-2,2]^n$. Choose $l$ large enough so that the following holds:
\begin{noname}\label{farfromK}
 If $x\in U\cap [-2,2]^n$ is a vertex of $T$ whose closed star $\overline{st}(x,T)$  is not contained in $U$ then $\overline{st}(x,T)\cap K = \emptyset$
\end{noname}
\begin{noname}\label{insidecube} If $x$ is a vertex of $T$ with $\overline{st}(x,T)\cap [-1,1]^n\neq\emptyset$ then $\overline{st}(x,T)\subset [-3/2,3/2]^n$.
\end{noname}
\begin{noname} \label{noname:graphprop} If $\sigma$ is an $n$-simplex of $T$ which intersects $[-2,2]^n$ then for any $x,y\in\sigma$ the $2$-plane $\tau(y)$ is the graph of a linear map $L_{x,y}:\tau(x)\to \tau(x)^\bot$ of norm  less than $1$.
\end{noname}
We denote by $T_1$ the union of all simplices  which are faces of $n$-simplices of $T$ which intersect $[-1,1]^n$.

In this section we deform the plane field $\tau$ into a plane field which is integrable in a neighborhood of the union of $K$ and the $(n-1)$-skeleton of $T_1$. The deformation is done in the space of smooth plane fields for which $T$ is in general position near $[-2,2]^n$, and which also satisfy \ref{noname:graphprop}. Furthermore, the deformation is constant in a neighborhood of \constdef.

The idea is to deform $\tau$ in a tubular  neighborhood of each simplex of $T_1$, starting with the $0$-simplices and extending it dimension by dimension up to the $(n-1)$-skeleton of $T_1$. To ensure that near $K$ the deformation is constant only simplices which are faces of an $n$-simplex not contained in $U$ trigger a deformation. Notice that a face of a simplex $\sigma$ may be the face of an $n$-simplex not contained in $U$ while $\sigma$ is not a face of such an $n$-simplex. Thus deformations of $\tau$ may take place in tubular neighborhoods of the boundary of a simplex $\sigma$, even if $\sigma$ itself does not trigger a deformation. 

The deformation process is inductive  by induction on the dimension $p$ of the skeleta of $T_1$ up to dimension $n-2$. The extension of the deformation from a neighborhood of the $(n-2)$-skeleton to a neighborhood of the $(n-1)$-skeleton differs mildly from the previous induction steps in as far as the plane field at a point of an $(n-1)$-simplex intersects the tangent plane of the simplex non-trivially. 

So let $0\leq j\leq n-2$.  We say that a smooth $2$-plane field  $\tau_j$ is civilized on the $j$-skeleton of $T_1$ if it is a $2$-plane field with respect to which $T$ is in general position near $[-2,2]^n$, and there are real numbers  $\delta_0 >\ldots >\delta_j>0$ and $\eta_0 >\ldots > \eta_j>0$   having the  properties \ref{normbdle}-\ref{integrableinU} below.
\begin{noname}\label{normbdle}
Denote for any point $x$ of an $i$-simplex $\sigma$ of $T_1$ with $0\leq i \leq j$ the closed $\delta$-neighborhood of $x$ in the affine $2$-plane $x+\tau_j(x)$ by $B_x(\delta)$ and the closed $\eta$-neighborhood of $x$ in $x+E_x$ by $E_x(\eta)$, where $E_x$ is the orthogonal complement  of $\tau_j(x) + T_\sigma$ with $T_\sigma$ the tangent plane of $\sigma$. Then, if $\sigma$ is a face of an $n$-simplex not contained in $U$,  the $(n-i)$-disks $B_x(\delta_i)\times E_x(\eta_i)$, $x\in \sigma$, are the fibres of a tubular neighborhood $N(\sigma)$ of $\sigma$ in $\R^n$, and any $(n-2)$-simplex of $T_1$ having $\sigma$ as a face intersects $\partial(B_x(\delta_i)\times E_x(\eta_i))$ in a subset of $\mathring{B}_x(\delta_i)\times \partial E_x(\eta_i)$.
\end{noname}
\begin{noname}\label{constonfibre}
For any $0\leq i\leq j$ and any point $x$ in an $i$-simplex of $T_1$ in the boundary of an $n$-simplex not contained in $U$  the plane field  $\tau_j$ is equal to $\tau_j(x)$ on $B_x(\delta_i)\times E_x(\eta_i)$. In particular, the plane field $\tau_j$ is integrable on $N(\sigma)$ and tangent to the fibres.
\end{noname}
\begin{noname}\label{compatible}
For any two simplices $\sigma$, $\sigma'$ of $T_1$ of dimension at most $j$ where each, $\sigma$ and $\sigma'$, is in the boundary of an $n$-simplex not contained in $U$ we have $N(\sigma)\cap N(\sigma')\subset N(\sigma\cap\sigma')$. Furthermore, if $\sigma'$ is a proper face of $\sigma$ and the fibre $B_x(\delta_i)\times E_x(\eta_i)$ of $N(\sigma)$ intersects $N(\sigma')$, say $(y_1,y_2)\in  B_x(\delta_i)\times E_x(\eta_i)$ lies in $B_v(\delta_{i'})\times \{y_2' \}$ with $v\in \sigma'$ and $y_2'\in E_v(\eta_{i'})$, $i'=\textrm{dim}\sigma'$, then  $B_x(\delta_i)\times\{y_2\} \subset \mathring{B}_v(\delta_{i'})\times \{y_2'\}$. In addition we demand that $N(\sigma)\cap \sigma'' = \emptyset$ for any simplex $\sigma''$ of dimension at least $j+1$ of which $\sigma$ is not a face.
\end{noname}
\begin{noname}\label{tubeinU}
If $0\leq i\leq j$ and the $i$-simplex  $\sigma$ is contained in $U$ and in the boundary of an $n$-simplex not in $U$, then $N(\sigma)\subset U$.
\end{noname}
\begin{noname}\label{integrableinU} $\tau_j$ is integrable in an open subset of $U$ containing 
$$ K,$$
$$\cup \{N(\sigma) \vert \sigma\in T_1, \textrm{dim}\sigma \leq j, \sigma\subset U\textrm{ and } \sigma\leq\rho\textrm{ with } \rho\not\subset U\},
$$
$$ \textrm{ and } \cup\{\sigma\vert \sigma\in T_1\textrm{ and } \sigma\subset U\}$$.
\end{noname}

By abuse of notation, we call any smooth $2$-plane field with respect to which the triangulation  $T$ of Section \ref{sect:genposition} is in general position near $[-2,2]^n$ civilized on the $(-1)$-skeleton of $T_1$. Set $\delta_{-1}$ and $\eta_{-1}$ to be $\infty$.

We then make the following 
\begin{claim}\label{claim:civilinduction} Let $-1\leq p\leq n-2$,and let $\tau_{p-1}$  be a $2$-plane field which is civilized on the $(p-1)$- skeleton of $T_1$, and let $\delta_0 >\ldots >\delta_{p-1}>0$ and $\eta_0 >\ldots > \eta_{p-1}>0$ be the associated real numbers. Then there exist a smooth $2$-plane field $\tau_p$, $0<\delta_p<\delta_{p-1}$, and $0<\eta_p<\eta_{p-1}$ such that properties \ref{normbdle}-\ref{integrableinU} hold for $j=p$ and $\delta_i$, $\eta_i$, $i=0,\ldots,p$. Furthermore, $\tau_p$ is homotopic to $\tau_{p-1}$ in the space of smooth $2$-plane fields with respect to which $T$ is in general position near $[-2,2]^n$ and which satisfy  \ref{noname:graphprop} . Furthermore, the homotopy  is constant in a neighborhood of \constdef.
\end{claim}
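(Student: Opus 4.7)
The plan is to carry out the inductive step by modifying $\tau_{p-1}$ inside freshly-chosen tubular neighborhoods of those $p$-simplices of $T_1$ which are faces of some $n$-simplex not contained in $U$, using the graph representation guaranteed by \ref{noname:graphprop} to interpolate in a convex subset of the Grassmannian of $2$-planes. First I would pick radii $0<\delta_p<\delta_{p-1}$ and $0<\eta_p<\eta_{p-1}$ small enough that for every such $p$-simplex $\sigma$ the disks $B_x(\delta_p)\times E_x(\eta_p)$, $x\in\sigma$, embed as the fibres of an honest tubular neighborhood $N(\sigma)$, that $N(\sigma)$ is disjoint from every simplex of $T_1$ of dimension $\geq p+1$ of which $\sigma$ is not a face, and that on any overlap $N(\sigma)\cap N(\sigma')$ with $\sigma'$ a proper face of $\sigma$ already civilized at an earlier step, the $(n-p)$-disk fibre of $N(\sigma)$ through a point of $N(\sigma')$ is contained in the corresponding fibre of $N(\sigma')$. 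The last requirement is the nesting condition of \ref{compatible}; it can be arranged by shrinking $\delta_p,\eta_p$ sufficiently because, by \ref{normbdle} and the general position of $T$, the fibres of $N(\sigma')$ foliate a neighborhood of $\sigma'$ transversely to $\sigma$, and $\tau_{p-1}$ is already constant along those old fibres.

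With the tubes in place, for each such $\sigma$ let $\pi_\sigma\colon N(\sigma)\to\sigma$ denote the tubular projection. Define a new plane field $\widetilde\tau_\sigma$ on $N(\sigma)$ by $\widetilde\tau_\sigma(y):=\tau_{p-1}(\pi_\sigma(y))$, viewed as a $2$-plane in $T_y\R^n$ via the canonical parallelization of $\R^n$. By construction $\widetilde\tau_\sigma$ is constant on each fibre, equal to $\tau_{p-1}(x)$ there, and tangent to the $B_x$-factor, so it defines an integrable $2$-plane field on $N(\sigma)$ whose leaves are the $2$-disks $B_x(\delta_p)\times\{e\}$. On a slightly smaller concentric tube $N'(\sigma)$ with radii $\delta_p/2,\eta_p/2$, set $\tau_p:=\widetilde\tau_\sigma$; on the collar $N(\sigma)\setminus N'(\sigma)$ interpolate between $\widetilde\tau_\sigma$ and $\tau_{p-1}$ using a smooth cutoff $\lambda$ in the fibre coordinate, the interpolation being performed linearly in the graph coordinates over the common reference plane $\tau_{p-1}(\pi_\sigma(y))$. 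Since by \ref{noname:graphprop} both $\widetilde\tau_\sigma(y)$ and $\tau_{p-1}(y)$ are graphs of linear maps of norm less than one over $\tau_{p-1}(\pi_\sigma(y))$ whenever $y$ and $\pi_\sigma(y)$ lie in a common $n$-simplex intersecting $[-2,2]^n$, the convex combination remains in this chart and therefore preserves both \ref{noname:graphprop} and the general position of $T$. Outside $\bigcup_\sigma N(\sigma)$ set $\tau_p:=\tau_{p-1}$, and reparametrize the cutoff by $t\in[0,1]$ to obtain the desired homotopy.

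Properties \ref{normbdle}, \ref{constonfibre}, and \ref{compatible} for $j=p$ hold by direct inspection; \ref{tubeinU} is ensured by shrinking $N(\sigma)$ inside $U$ when $\sigma\subset U$, which is possible since $\tau_{p-1}$ is integrable on a neighborhood of $\sigma$ there; and \ref{integrableinU} is preserved because on each $N(\sigma)$ the modified field is itself a foliation. The homotopy is constant off $\bigcup_\sigma N(\sigma)$; by \ref{insidecube} these tubes lie inside $(-3/2,3/2)^n$, and by \ref{farfromK} the $p$-simplices triggering a tube are disjoint from $K$, so the homotopy is constant near $K\cup(\R^n\setminus(-2,2)^n)$ as required. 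The main obstacle will be enforcing the nesting condition underpinning \ref{compatible}: a priori the fibre of $N(\sigma)$ through a point $y\in N(\sigma)\cap N(\sigma')$ need not lie inside the fibre of $N(\sigma')$ through $y$, and forcing this demands a careful coordinated choice of the new normal subspaces $E_x$ to $\tau_p(x)+T_\sigma$ along $\sigma$, together with a sufficiently small common radius. This is handled by fixing all the earlier $N(\sigma')$ first and then choosing $\delta_p,\eta_p$ small enough, exploiting the uniform transversality provided by the general position of the triangulation, which is the technical core of the induction step.
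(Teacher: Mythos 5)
There is a genuine gap: your proposal applies the same graph-interpolation recipe uniformly to every $p$-simplex $\sigma$, but this fails for the case $\sigma\subset U$, and the paper's proof is careful to separate these two cases precisely for the reason below. In the interpolation collar $N(\sigma)\setminus N'(\sigma)$ the new field $\tau_p$ is a cutoff-weighted convex combination of $\widetilde\tau_\sigma$ and $\tau_{p-1}$ in graph coordinates; such a combination is in general not closed under the Frobenius condition and so is not integrable, even when both ends of the interpolation are. When $\sigma\subset U$, the field $\tau_{p-1}$ \emph{was} integrable throughout a neighbourhood of $\sigma$, and your modification destroys that integrability in the collar. This is not a cosmetic issue: condition \ref{integrableinU} requires $\tau_p$ to be integrable on an open set containing \emph{every} simplex of $T_1$ contained in $U$ (including $n$-simplices), and an $n$-simplex in $U$ that meets the interpolation collar of $N(\sigma)$ will no longer lie in the integrability set. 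This breaks the inductive invariant, forcing re-civilization of faces of that $n$-simplex and ultimately propagating modifications into a neighbourhood of $K$, which the claim forbids. Your sentence ``\ref{integrableinU} is preserved because on each $N(\sigma)$ the modified field is itself a foliation'' is wrong: the modified field is a foliation only on $N'(\sigma)$, not on the collar.

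The paper handles $\sigma\subset U$ by a different mechanism that never leaves the class of integrable fields. Since $\tau_{p-1}$ is a genuine foliation near $\sigma$, one can choose a tubular neighbourhood of $E(\sigma)$ whose fibres are small leaf disks, and then use the uniqueness theorem for tubular neighbourhoods to produce an ambient isotopy (with support in $N(\sigma)$ and away from the $N(\sigma')$'s) carrying this leaf-disk tube to the desired standard tube $\psi$. Pushing the foliation forward by this isotopy straightens it so that \ref{constonfibre} holds, and integrability is preserved at every moment because the homotopy of plane fields is realized by a diffeotopy of $\R^n$ rather than by pointwise interpolation. Your Case~1 reasoning (for $\sigma\not\subset U$) and your handling of \ref{normbdle}, \ref{compatible}, \ref{tubeinU}, the preservation of \ref{noname:graphprop} via the norm-$<1$ graph chart, and the support argument via \ref{farfromK} and \ref{insidecube} are all in line with the paper; it is only the $\sigma\subset U$ case that needs the separate ambient-isotopy argument you omitted.
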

\begin{proof}[Proof of \textup{Claim \ref{claim:civilinduction}}] The idea is very simple. Look at all $p$-simplices $\sigma$ of $T_1$ which are a face of an $n$-simplex not contained in $U$. For all proper faces $\sigma'$ of such a simplex the plane field $\tau_{p-1}$ is constant along the fibres $B_y(\delta_i)\times E_y(\eta_i)$, $i= \textrm{dim }\sigma'$, and $\sigma$ intersects these fibres in $\mathring{B}_y(\delta_i)\times E_y(\eta_i)$ (see \ref{normbdle}). Thus, if $x$ is a point of $\sigma$ lying in the fibre  $B_y(\delta_i)\times E_y(\eta_i)$ of $N(\sigma')$, then $\tau_{p-1}(x)=\tau_{p-1}(y)$, and the orthogonal complement $E_x$ of $T_x\sigma + \tau_{p-1}(x)$ is a subspace of $E_y$. Therefore, we find preliminary $\delta_p$ and $\eta_p$  not depending on $\sigma$ such that \ref{normbdle}, \ref{compatible}, and \ref{tubeinU} hold. The last condition in \ref{normbdle} will be satisfied if $\eta_p / \delta_p$ is sufficiently small. Also \ref{constonfibre} holds for those $x\in\sigma$ which are for some proper face $\sigma'$ of $\sigma$ in $N(\sigma')$, say $x= (u,v)\in B_y(\delta_i)\times E_y(\eta_i)$, with $E_x(\eta_p)\subset \{u\}\times E_y(\eta_i)$.

We now want to deform $\tau_{p-1}$ keeping \ref{integrableinU} in mind so that \ref{constonfibre} holds for every $x\in \sigma$. The deformation will have support in the complement of the tubular neighborhoods $N(\sigma')$ where the $\sigma'$ are the proper faces of $\sigma$.

We distinguish two cases:\\[1mm]
{\bf Case 1.} $\sigma \not\subset U$.\\[1mm]
By the last condition in \ref{compatible} (which is satisfied including the preliminary tubular neighborhoods of the $p$-simplices) and by  \ref{noname:graphprop}, for every $x\in\sigma$ and $z\in B_x(\delta_p)\times E_x(\eta_p)$ the plane $\tau_{p-1}(z)$ is the graph of a linear map $f_{zx}:\tau_{p-1}(x)\to \tau_{p-1}(x)^\perp$ of norm less than $1$. Then $\tau_{p-1,t}(z)$ is defined to be the graph of $(1-t)f_{zx}$. There are $\overline{\delta}_p>\delta_p$ and $ \overline{\eta}_p>\eta_p$ such that the $B_x(\overline{\delta}_p)\times E_x(\overline{\eta}_p)$, $x\in\sigma$, still are the fibres of a tubular neighborhood $\overline{N}(\sigma)$  of $\sigma$. Use these larger fibres to slow down the homotopy of $\tau_{p-1}$ when moving ``radially'' from $\partial(B_x(\delta_p)\times E_x(\eta_p))$ to $\partial(B_x(\overline{\delta}_p)\times E_x(\overline{\eta}_p))$ so that the homotopy is supported in $\overline{N}(\sigma)$.

Note that during the homotopy condition \ref{noname:graphprop} is always satisfied: if $y$ is a point of an $n$-simplex $\rho$ with $z\in \rho$ then also $x$ is in $\rho$ and there are linear maps $f_{xy},f_{zy}:\tau_{p-1}(y)\to \tau_{p-1}(y)^\perp$ such that $\tau_{p-1}(x)$ is the graph of  $f_{xy}$ and $\tau_{p-1}(z)$ is the graph of $f_{zy}$. Then $\tau_{p-1,t}(z)$ is the graph of $(1-t)f_{zy}+tf_{xy}$ which has norm at most equal to the maximum of the norms of $f_{xy}$ and $f_{zy}$. Also the homotopy is constant in every $N(\sigma')$ with $\sigma'$ a proper face of $\sigma$.\\[1mm]
{\bf Case 2.} $\sigma \subset U$.\\[1mm]
Notice first that applying the same homotopy as above the following may happen. In the annulus bundle $\overline{N}(\sigma)\setminus \mathring{N}(\sigma)$ over $\sigma$, where we slow down the homotopy, the plane field may become non-integrable although it was integrable there beforehand. Thus an $n$-simplex which was in the open set where $\tau_{p-1}$ is integrable may not be in the set where $\tau_{p}$ is integrable. But this would force us to reconsider all faces of this simplex and start the process all over. Eventually we would  have to deal with simplices which intersect $K$. This is the reason why we treat simplices contained in $U$ differently.

Since $\tau_{p-1}$ is integrable in a neighborhood of $N(\sigma)$ there is a tubular neighborhood $\varphi: E(\sigma)\times B(\delta_p)\to \R^n$ of $E(\sigma):= \bigcup\{\{0\}\times E_x(\eta_p)\vert x\in \sigma\}$ the fibres of which are $\delta_p$-neighborhoods of $z\in E(\sigma)$ in the leaf through $z$ of the foliation defined by $\tau_{p-1}$ in a neighborhood of $N(\sigma)$. By the uniqueness theorem for tubular neighborhoods there is an ambient isotopy of $\R^n$ which maps this tubular neighborhood to the tubular neighborhood $\psi: E(\sigma)\times B(\delta_p)\to \R^n$ which has $\{z\}\times B_x(\delta_p)$, $z\in E_x(\eta_p)$, $x\in \sigma$, as fibres. We need to straighten the foliation only in a small neighborhood of $\sigma$. So it suffices to isotope the image of $E(\sigma)\times B(\delta)$ of $\varphi$ into the image of $\psi$. The standard way of producing such an isotopy is by linearization of the fibres of $\varphi$ at points of $E(\sigma)$ and then connecting  the linear embeddings to the one of $\psi$ by the straight line segment in the space of linear embeddings (see e.~g. \cite{BroJa}, end of the proof of (12.13)). This first gives an isotopy between the (small) tubular neighborhoods, i.e. a level preserving embedding  $\Phi: [0,1]\times (E(\sigma)\times B(\delta))\to [0,1]\times \R^n$. This is extended to an ambient isotopy in the usual way  interpreting  the isotopy as a vectorfield on the image of $\Phi$ and extending this vectorfield to all of $[0,1]\times R^n$ by slowing it down to the zero vector field within a small neighborhood of the image of $\Phi$. Doing the slowing down process at level $t$ along the fibres of the tubular neighborhood given by the restriction of $\Phi$ to level $t$ will have the effect that the ambient isotopy will have support in $N(\sigma)$ and in the complement of the union of the interiors of the $N(\sigma')$ with $\sigma'$ a proper face of $\sigma$.

It is then clear that for a small enough $\delta=:\delta_p$ conditions \ref{noname:graphprop}-\ref{integrableinU} will be satisfied  with the possible exception of the last condition of \ref{normbdle} which concerns the intersections of the fibres of $N(\sigma)$ with $(n-2)$-simplices. But we noticed already that we can achieve this by shrinking $\eta_p$.
\hfill \vspace{5pt} 
\end{proof}

As the final step in this section we want to homotope $\tau_{n-2}$ to $\tau_{n-1}$ which is integrable in a neighborhood of the $(n-1)$-skeleton of $T_1$ and satisfying conditions \ref{noname:graphprop}-\ref{integrableinU} with \ref{normbdle}, \ref{constonfibre} and \ref{compatible} being replaced by their obvious analogues \ref{normbdle1}, \ref{constonfibre1} and \ref{compatible1}. 

\begin{noname}\label{normbdle1}
Denote for any point $x$ of an $(n-1)$-simplex $\sigma$ of $T_1$  the closed  $\delta$-neighborhood of $x$ in the affine line $x+F_x$ by $F_x(\delta)$, where $F_x$ is the orthogonal complement  in $\tau_{n-1}(x)$   of $ T_\sigma\cap \tau_{n-1}(x)$. Then, if $\sigma$ is a face of an $n$-simplex not contained in $U$,  the segments $ F_x(\delta_{n-1})$, $x\in \sigma$, are the fibres of a tubular neighborhood $N(\sigma)$ of $\sigma$ in $\R^n$.
\end{noname}
\begin{noname}\label{constonfibre1}
For any $0\leq i\leq n-2$ and any point $x$ in an $i$-simplex $\sigma$ of $T_1$ in the boundary of an $n$-simplex not contained in $U$  the plane field  $\tau_{n-1}$ is equal to $\tau_{n-1}(x)$ on $B_x(\delta_i)\times E_x(\eta_i)$. In particular, the plane field $\tau_{n-1}$ is integrable on $N(\sigma)$ and tangent to the fibres. Furthermore for any point $x$ in an $(n-1)$-simplex $\sigma$ of $T_1$ in the boundary of an $n$-simplex not contained in $U$  the plane field  $\tau_{n-1}$ is equal to $\tau_{n-1}(x)$ on $F_x(\delta_{n-1})$. In particular, the plane field $\tau_{n-1}$ is integrable on $N(\sigma)$ and  the fibres are tangent to the leaves, i.~e. the plane field in $N(\sigma)$ is the pullback of the integrable line field $T_\sigma \cap \tau_{n-1}(x)$, $x\in\sigma$, by the tubular neighborhood projection.
\end{noname}
\begin{noname}\label{compatible1}
\ref{compatible} holds for $p=n-1$. Furthermore for any two $(n-1)$-simplices $\sigma$, $\sigma'$ of $T_1$, each in the boundary of an $n$-simplex not contained in $U$, we have $N(\sigma)\cap N(\sigma')\subset N(\sigma\cap\sigma')$. We also demand that if $\sigma''$ is a proper face of $\sigma$ and the fibre $F_x(\delta_{n-1})$ of $N(\sigma)$ intersects $N(\sigma')$, say $y \in  F_x(\delta_{n-1})$ lies in $B_v(\delta_{i'})\times \{y_2' \}$ with $v\in \sigma'$ and $y'\in E_v(\eta_{i'})$, $i'=\textrm{dim}\sigma'$, then  $F_x(\delta_{n-1}) \subset \mathring{B}_v(\delta_{i'})\times \{y'\}$. In addition we demand that $N(\sigma)\cap \sigma''' = \emptyset$ for any $n$-simplex  $\sigma'''$ of which $\sigma$ is not a face.
\end{noname}

The proof that such a homotopy exists in the space of plane fields satisfying all our usual requirements and with support in the complement of the interiors of all $N(\sigma')$ with $\mathrm{dim}\sigma'<n-1$ is entirely analogous to the proof of Claim \ref{claim:civilinduction} and is omitted.

\begin{remark}\label{rem:civil} 
A plane field satisfying, like our plane field $\tau_{n-1}$, conditions \ref{normbdle1}-\ref{compatible1} is called civilized in \cite{Thur}. In the next two sections we will see that a civilized plane field which also satisfies \ref{noname:graphprop}, \ref{tubeinU}, and \ref{integrableinU} can be deformed into a plane field which is integrable in a neighborhood of $T_1$ by a homotopy which is constant in a neighborhood of \constdef , thus completing the proof of our main proposition, Proposition \ref{prop:main}.
\end{remark}

\section{Filling the hole. Part I}\label{sect:hole1}
We now may assume that the plane field $\tau$ of Proposition \ref{prop:main} satisfies conditions  \ref{normbdle1}-\ref{compatible1}  and also \ref{noname:graphprop}, \ref{tubeinU}, and \ref{integrableinU}.  In this section we complete the proof of Proposition \ref{prop:main} for the dimensions $n$ ($\geq 4$) for which the identity path component of  $\mathrm{Diff}_c\R^{n-2}$ is simply connected. This includes $n=4$ and $5$  (see Remark \ref{vanishhgroups} at the end of this section).

By \ref{integrableinU} we need to homotope $\tau$ only in a compact part of the interior of $n$-simplices $\sigma$ which are not contained in $U$.  For these $\sigma$ we have from \ref{normbdle1} - \ref{compatible1} explicit information about what $\tau$ looks like near their boundaries $\dot{\sigma}$, namely on $N(\dot{\sigma}):=\cup\{N(\sigma')\vert \sigma' \textrm{ a proper face of }\sigma\}$. Any subset of $\sigma$ diffeomorphic to $B^n$ or to $B^2\times B^{n-2}$ containing the complement of $\mathring{N}(\dot{\sigma})$ in $\sigma$ we call a hole. Our task is to fill  for each $\sigma$ some hole by deforming $\tau$ in $\sigma$ relative with respect to the complement of the hole  to an integrable field . Then by  \ref{farfromK} and \ref{insidecube} we are done.
\subsection{A product structure for the holes adapted to $\tau$.}
In this subsection we show that for each $n$-simplex $\sigma$ of $T_1$ not contained in $U$ there is a smooth embedding $\varphi: D^2\times  D^{n-2} \to \mathring{\sigma}$ such that the pullback $\tau':=\varphi^*\tau$ of $\tau$ has the properties \ref{projnearbdy}, \ref{constnearbdy}, and \ref{transversetofactor} below. Here $D^k$ is the closed unit disk in $\R^k$.
\begin{noname}\label{projnearbdy} Near $D^2\times\partial D^{n-2}$ the plane field $\tau'$ is the kernel of derivative of the projection to the $D^{n-2}$-factor.
\end{noname}
\begin{noname}\label{constnearbdy}
$\tau'$ is transverse to $\partial D^2\times D^{n-2}$ and is in a neighborhood of $\partial D^2\times D^{n-2}$ the pullback of the line field $\tau'\cap T(\partial D^2\times D^{n-2})$ by the projection $(D^2\setminus \{0\})\times D^{n-2}\to \partial D^2\times D^{n-2}$. Furthermore the line field $\tau'\cap T(\partial D^2\times D^{n-2})$ is transverse to $\{x\}\times D^{n-2}$ for all $x\in \partial D^2$.
\end{noname}
\begin{noname}\label{transversetofactor}
$\tau'$ is homotopic to a plane field which is transverse to $ \{x\}\times D^{n-2}$ for all $x\in D^2$.
\end{noname}

\begin{proof} Consider for some $x\in\sigma$ the orthogonal projection $p_x:\R^n\to A_x:=\tau(x)^\perp$. The image $p_x(\sigma)$ is a convex polytope $P^{n-2}\subset A_x\cong\R^{n-2}$. Because of general position $p_x$ restricted to  every $(n-2)$-face of $\sigma$ is an affine homeomorphism  onto its image. Therefore there is an $(n-3)$ dimensional subcomplex $\Sigma$ of $\sigma$ such that $p_x\vert_\Sigma:\Sigma \to \partial P^{n-2}$ is a simplicial isomorphism. The subcomplex $\Sigma$ is independent of the choice of $x\in\sigma$. In particular, $\tau$ is transverse to to the boundary $\dot{\sigma}$ of $\sigma$ in the complement of $\Sigma$. 

$p_x:\mathring{\sigma}\to\mathring{P}^{n-2}$ is a trivial fibre bundle with fibre an open $2$-disk. This provides us with a diffeomorphism $\varphi_1: \R^2\times \mathring{P}^{n-2}\to \mathring{\sigma}$. We may choose $\varphi_1$ so that $\varphi_1(S^1\times  \mathring{P}^{n-2})$ is close enough to $\dot{\sigma}$ to be in the civilized neighborhood of $\dot{\sigma}$ and such that $\tau$ is transverse to $\varphi_1(S^1\times  \mathring{P}^{n-2})$ and the line field $\tau\cap T(S^1\times \mathring{P}^{n-2})$ is transverse to every $\{x\}\times \mathring{P}^{n-2}$, $x\in S^1$. Here $S^1$ is the unit circle in $\R^2$. 

We identify $\mathring{P}^{n-2}$ with $\R^{n-2}$ in such a way that the inverse image of  complement of $D^{n-2}$ under $p_x$ is contained in the civilized neighborhood of $\Sigma$. Furthermore we can arrange for the zero section of the bundle, i. e. $\varphi_1(\{0\}\times \mathring{P}^{n-2})$, to be transverse to $\tau$ in the complement of an $(n-2)$-disk contained in the interior of $D^{n-2}$.

To achieve \ref{projnearbdy} we change $\varphi_1$ by a diffeotopy of $\mathring{\sigma}$ with support in a neighborhood of the image of $D^2\times S^{n-3}$. The diffeotopy is an ambient isotopy which moves the tubular neighborhood $D^2\times U$ of $\{0\}\times U$ to the tubular neighborhood of $\{0\}\times U$  the fibres of which are disks in the leaves of $\tau$  with center the corresponding point of $U$. Here $U$ is a small annular neighborhood of $S^{n-3}$. 

A second diffeotopy will give us \ref{constnearbdy}. From the fact that the image of $S^1\times \R^{n-2}$ is inside the civilized neighborhood of $\dot{\sigma}$ gives us a tubular neighborhood of $S^1\times D^{n-2}$ such that the plane field in this neighborhood is the pullback under the tubular neighborhood map of the line field induced on $S^1\times D^{n-2}$. If we are in $N(\rho)$ with $\textrm{dim}\rho=n-1$ and not in an $N(\rho')$ with $\rho'$ a proper face of $\rho$ this is part of the civilization structure. In neighborhoods of lower dimensional simplices this line field can be obtained by interpolation proceeding inductively by decreasing dimension. The ambient isotopy then moves the standard tubular neighborhood of $S^1\times D^{n-2}$ in $\R^2\times D^{n-2}$ into this tubular neighborhood.

Denote by $\varphi_2$ the diffeomorphism $\varphi_1$ after having subjected it to the two diffeotopies above. Then $\varphi$ is the restriction of $\varphi_2$  to $D^2\times D^{n-2}$.

Notice that \ref{transversetofactor} is clear since by \ref{noname:graphprop} the field $\tau$ is homotopic to the constant plane  field $\textrm{ker}dp_x$  given by the kernel of the differential of $p_x$. Thus $\varphi^*(\tau)$ is homotopic to $\varphi^*(\textrm{ker}dp_x)$, which is transverse to the factors $\{x\}\times D^{n-2}$, $x\in D^2$.
\end{proof}

\subsection{Foliated $\R^{n-2}$-products.}
Recall that for a $k$-manifold $M$ a foliated $M$-product over the manifold $X$ is a codimension $k$ foliation on $X\times M$ which is transverse to the second factor. The foliated $M$-product is said to have compact support if there is a compact subset $C$ of $M$ such that on $X\times (M\setminus C)$ the foliation is given by projection onto $M\setminus C$, i.~e. for any $y\in M\setminus C$ the connected components of $X\times \{y\}$ are leaves of the foliation. The last sentence of Property \ref{constnearbdy} says that $\tau'$ is a foliated $D^{n-2}$-product over $S^1=\partial D^2$, while Property \ref{projnearbdy} says that the restriction of $\tau'$ to $S^1\times \mathring{D}^{n-2}$ is a foliated $\mathring{D}^{n-2}$-product over $S^1$ with compact support. The first part of Property \ref{constnearbdy} states that for for some $\varepsilon >0$ for all $1-\varepsilon<r\leq1$ the restriction of $\tau'$ to $rS^1\times D^{n-2}$ is a foliated $D^{n-2}$-product which is independent of $r$ in the sense that the obvious diffeomorphism $rS^1\times D^{n-2}\to S^1\times D^{n-2}$ is foliation preserving. Since any foliated $\mathring{D}^{n-2}$-product over $S^1$ with compact support extends uniquely to a foliated $D^{n-2}$-product over $S^1$ which is given by projection to the second factor near $S^1\times S^{n-3}$ we focus our attention only these, i.~e. we are looking at foliated $\R^{n-2}$ bundles over $S^1$ with compact support. 

Thus, what remains to be done is to find for any foliated $\R^{n-2}$-product over $S^1$ with compact support a $2$-dimensional foliation on $D^2\times\R^{n-2}$ which satisfies:
\begin{noname}\label{extend1}
Outside some compact $D^2\times C$ it is  given by projection onto $\R^{n-2}$,
\end{noname}
 \begin{noname}\label{extend2} it is transverse to $rS^1\times \R^{n-2}$for $r$ close enough to $1$, and induces there the given foliated $\R^{n-2}$-bundle,
 \end{noname}
and
\begin{noname}\label{transversetofactfill}
the tangent plane field to the foliation is homotopic to one which is transverse to the $\R^{n-2}$-factor by a homotopy which is constant near $S^1\times \R^{n-2}$.
\end{noname}
To do this, we change our perspective. Notice that any  \frp $\xi$ over $S^1$ with compact support defines a path $w_\xi :\R\to \mathrm{Diff}_c\R^{n-2}$ starting at the identity which describes the leaf of $\xi$ through the point $(1,y)\in \{1\}\times \R^{n-2} $ as the set $\{(\exp(2\pi it), w_\xi(t)(y))\vert t\in \R\}$. The path $w_\xi$ is periodic in the sense that for each integer $m$ we have $w_\xi(m+t)=w_\xi(t)\circ w_\xi(m)$ so that $w_\xi$ is determined by its restriction to $[0,1]$. Furthermore there is a compact subset $C$ of  $\R^{n-2}$ such that for all $t\in \R$ the diffeomorphism $w_\xi(t)$ restricted to the complement of $C$ is the identity.

Conversely, we would like to associate to periodic curves $w:\R\to \mathrm{Diff}_c\R^{n-2}$ a \frp over $S^1$ with compact support.

At this point a few remarks about the topology and smoothness structure of $\DR$ are called for. For compact $K\subset\R^{n-2}$ denote by $\mathrm{Diff}_K\R^{n-2}$ the space of diffeomorphisms of $\R^{n-2}$ with support in $K$ with its usual compact open $C^\infty$-topology. Then $\DR$ as a set is the inductive limit of the $\mathrm{Diff}_K\R^{n-2}$ where $K$ ranges over the compact subsets of $\R^{n-2}$. Then any compact subset of $\DR$ is contained in some $\mathrm{Diff}_K\R^{n-2}$. In particular the image of any continuous map from $\R$ into $\DR$ restricted to a bounded subset is contained in some $\mathrm{Diff}_K\R^{n-2}$.

A smooth curve of $\DR$ is a map $w:\R\to \DR $ having the following two properties:
\begin{noname} \label{smoothadj}The adjoint map $w^\vee:\R\times\R^{n-2}\to\R^{n-2}$ given by $w^\vee(t,x)=w(t)(x)$ is smooth.
\end{noname}
\begin{noname}\label{compactsupp}
For each bounded subset $J$ of $\R$ there is a compact $K\subset\R^{n-2}$ such that for every $x\not\in K$ the path $t\mapsto w(t)(x)$ is constant on $J$. (Compare 30.9 and 42.5 in \cite{KriegMich}; be not deterred by the use of the $c^\infty$-topology in their definition of infinite dimensional manifolds since this does not change the set of smooth curves.)
\end{noname} 

Thus periodic smooth curves of $\DR$ correspond bijectively to smooth \frps over $S^1$ with compact support. A map $w:[0,1]\to \DR$   is called a smooth periodic curve if   $w(0)= \mathrm{id}$ and its extension $w:\R\to \DR$ given by $w(t+m):= w(t)\circ w(1)^m$, $t\in [0,1]$, $m\in \Z$,  is smooth. The extension is then obviously periodic. We will call a smooth periodic curve $w:[0,1]\to \DR$ {\it fillable} if the associated \frp over $S^1$ with compact support can be extended to a $2$-dimensional foliation on $D^2\times \R^{n-2}$ satisfying \ref{extend1} and \ref{extend2}. 

A smooth periodic homotopy between smooth periodic curves $w_0,w_1:[0,1]\to\DR$ is a smooth map $h: [0,1]\times [0,1]\to \DR$ which is constant on $[0,1]\times\{0\}$ and $[0,1]\times\{1\}$, restricts to $w_i$ on $[0,1]\times \{i\}$, and for which the extension to $\R\times [0,1]$ given by $(t+m,s)\mapsto h(t,s)\circ w_0(1)^m$, $t\in [0,1]$, $m\in \Z$,  is smooth. If two periodic curves are smoothly homotopic then we can find smooth homotopies which are constant near $0$ and $1$ in the sense that the induced curves for small $s$ are equal to the restriction to $[0,1]\times\{0\}$ and for $s$ close to $1$ equal to the restriction to $[0,1]\times\{1\}$. 

\begin{remark}\label{rem:homotIsFol} Notice that a smooth periodic homotopy corresponds to a foliated $\R^{n-2}$-product with compact support over $S^1\times[0,1]$. But be aware that a smooth map $h:\R\times[0,1]\to \DR$ which is periodic in the sense that each restriction to $\R\times\{s\}$, $s\in[0,1]$, is periodic will in general not correspond to a foliated product over $S^1\times[0,1]$.
\end{remark}

Therefore, if $w$ and $w'$ are smoothly periodic homotopic and $w$ is fillable, then so is $w'$. This allows us to look only at curves $w:[0,1]\to \DR$ starting at $id$ which are constant near $0$ and $1$. These are always periodic, and concatenating two of these will give us a new periodic curve.  An interval $J\subset [0,1]$ where a path $w$  is constant we call a horizontal interval (for $w$), since the  foliation on $S^1\times \R^{n-2}$ over the part of $S^1$ corresponding to this interval is horizontal. A smooth path starting at $id$ which is horizontal near its beginning and end we call an {\it adjusted smooth path}.
\begin{lemma}\label{lem:prodfill} The concatenation of two fillable adjusted smooth paths is fillable.
\end{lemma}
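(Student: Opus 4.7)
The plan is to split $D^2$ into two half-disks and place a transported copy of the given fillings on each. Let $P_0,P_{1/2}\in\partial D^2$ correspond to $t=0$ and $t=1/2$ in the parameterization of $w=w_1*w_2$, and let $\gamma\subset D^2$ be a smooth arc joining them transversally to $\partial D^2$, splitting $D^2$ into $D_+$ (adjacent to the upper arc $\{t\in[0,1/2]\}$) and $D_-$ (adjacent to the lower arc $\{t\in[1/2,1]\}$). I will take $\F$ on $D_+\times\R^{n-2}$ to be a rescaled copy of the filling $\F_1$, and on $D_-\times\R^{n-2}$ to be a rescaled copy of $\F_2$ shifted in the $\R^{n-2}$-factor by $w_1(1)$; the shift is forced by the formula $w(t)=w_2(2t-1)\circ w_1(1)$ on $[1/2,1]$, which says that the leaf passing through $(P_{1/2},y)$ must arrive there at height $w_1(1)(y)$ rather than $y$.

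More precisely, choose smooth diffeomorphisms $\phi_{\pm}\colon D^2\to D_{\pm}$ (with corners at $P_0,P_{1/2}$ smoothed) identifying $\partial D^2$ with $\partial D_{\pm}$ so that the non-horizontal portion of $\partial D^2$ (where $w_1$, respectively $w_2$, is non-constant) maps onto the corresponding arc of $\partial D_{\pm}$, while the horizontal ends of $\partial D^2$ (where $w_i$ is constant) map into wedge-shaped collars of $\gamma$ inside $D_{\pm}$. Declare $\F$ to be $(\phi_+)_*\F_1$ on $D_+\times\R^{n-2}$ and $(\phi_-)_*\F_2$ composed with the appropriate $w_1(1)$-translation on $D_-\times\R^{n-2}$. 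The boundary condition and condition \ref{extend1} follow by construction and compactness of supports; condition \ref{transversetofactfill} is inherited by concatenating, inside $D_+$ and $D_-$ respectively, the transverse-plane-field homotopies supplied by $\F_1,\F_2$ while keeping them constant near $\partial D^2$.

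The main obstacle is arranging the gluing along $\gamma$ to be smooth. Here the hypothesis that the paths $w_i$ are \emph{adjusted}, i.e.\ horizontal at their ends, is crucial: by a small isotopy of $\F_1,\F_2$ supported near the corresponding horizontal boundary arcs, one can make each filling an honest horizontal product in a collar of those arcs, without disturbing conditions \ref{extend1}--\ref{transversetofactfill}. Under $\phi_{\pm}$ these horizontal collars map onto two-sided collars of $\gamma$, and the $w_1(1)$-translation on the $D_-$ side lines up the common horizontal heights along $\gamma$; matching identical horizontal products across $\gamma$ then yields a smooth foliation on $D^2\times\R^{n-2}$. The corner bookkeeping at $P_0,P_{1/2}$, where the horizontal sectors from $\partial D^2$ and from $\gamma$ meet, is handled by the same tubular-neighborhood and isotopy-extension arguments already used in Section \ref{sect:civilization}.
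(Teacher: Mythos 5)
Your proof is correct and takes essentially the same approach as the paper: decompose $D^2$ into two pieces, transport the given fillings of $w_1$ and $w_2$ to the pieces, and use the adjustedness (horizontality at the ends) to glue smoothly along the dividing locus. The only refinement in the paper's version is that instead of gluing the two transported fillings directly to each other along an arc $\gamma$, it inserts a small \emph{horizontal buffer strip} $\{z\in\C:|\Im z|\le\varepsilon\}$ between the two half-disk regions $D^2\cap\{\Im z\ge\varepsilon\}$ and $D^2\cap\{\Im z\le-\varepsilon\}$; since both transported fillings are already horizontal near the straight segments $\{\Im z=\pm\varepsilon\}$, the two pieces each glue trivially to the horizontal foliation over the strip, which sidesteps the corner-smoothing and collar-matching bookkeeping at $P_0,P_{1/2}$ entirely. (One further minor point: the ``shift by $w_1(1)$'' you apply to $\F_2$ is really just a matter of where the basepoint of the lower boundary circle is placed; if $\phi_-$ sends the basepoint of $\partial D^2$ to the point near $P_{1/2}$, the normalized foliated $\R^{n-2}$-product on the lower boundary is exactly that of $w_2$, and no relabeling of $\F_2$ is needed.)
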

\begin{proof} The concatenation of $w_1$ and $w_2$ is the smooth path $w_1\ast w_2$ defined by
 $$
 w_1\ast w_2(t):= \left\{\begin{array}{lcl}
                                     w_1(2t)&:&\textrm{ if } 0\leq t\leq 1/2,\\
                                     w_2(2t-1)\circ w_1(1)&:&\textrm{ if } 1/2\leq t\leq 1.
                                     \end{array}
                            \right.
 $$ 
 Since $w_1$ and $w_2$ are horizontal near $0$ and $1$ there is an $\varepsilon>0$ such that the foliation associated to $w_1\ast w_2$ is horizontal on the two intervals of $S^1$ of points having distance at most $2\varepsilon$ from the real axis. Thus starting with $\sqrt{1-\varepsilon^2}+ i\cdot \varepsilon$ as basepoint and running $w_1$ from there to $-\sqrt{1-\varepsilon^2}+ i\cdot \varepsilon$ at double speed we obtain a smooth path from $id$ to $w_1(1)$. Similarly, starting at $-\sqrt{1-\varepsilon^2}- i\cdot \varepsilon$ and running $w_2\circ w_1(1)$ till $\sqrt{1-\varepsilon^2}- i\cdot \varepsilon$ at double speed we get a smooth path starting at $w_1(1)$ and ending at $w_2(1)\circ w_1(1)$. We extend the first path by concatenating it with the constant path with value $w_1(1)$ on the straight segment from $-\sqrt{1-\varepsilon^2}+ i\cdot \varepsilon$ to $\sqrt{1-\varepsilon^2}+ i\cdot \varepsilon$ and the second one by concatenating the constant path with value $w_1(1)$ on the segment from $\sqrt{1-\varepsilon^2}- i\cdot \varepsilon$ to $-\sqrt{1-\varepsilon^2}- i\cdot \varepsilon$ with it . Up to reparametrization, the first path is $w_1$ while the second one is $w_2\circ w_1(1)$. By hypothesis both are fillable so that we obtain a $2$-dimensional foliation on $(D^2\setminus \{z\in \C:\; |\Im(z)|<\varepsilon\})\times \R^{n-2}$ which is horizontal near the two straight segments with imaginary part equal to $\pm \varepsilon$. We can extend this foliation to all of $D^2\times \R^{n-2}$ by adding the horizontal foliation over $\{z\in \C:\; |\Im(z)|\leq\varepsilon\}$.
\end{proof}
 
 \subsection{A fillable foliated $\R^{n-2}$-product.}\label{ThurExample}
 In this short subsection we present the example of Thurston \cite{Thur}  of a filling for a particular non-trivial smooth  periodic curve. On the $2$-torus $S^1\times S^1$ consider the foliation by lines of constant slope $a$, i.~e. the foliation defined by the closed $1$-form $d\theta-a\,d\varphi$ where $(\varphi,\theta)$ are the coordinates of $S^1\times S^1$. This foliation corresponds to the smooth periodic curve $t\mapsto R_{ at }$ where $R_s\subset \mathrm{Diff}S^1$ is rotation by $2\pi s$. All these foliations allow a filling to a $2$-dimensional foliation of the solid torus $D^2\times S^1$ by first pushing the $1$-dimensional foliation constantly a little to the inside, then turbulizing it in the $\theta$-direction to a foliation which has the torus $\frac{1}{2}S^1\times S^1$ as the limit set of all its leaves, and fill in the remaining solid torus by a Reeb component. 
 
 Thurston proceeds then by looking at the standard tubular neighbourhood $S^1\times D^{n-3}$ of the standard $S^1$ in $\R^{n-2}$. Please {\bf notice} that this is where we have to assume that $n\geq 4$. Then $S^1\times S^1\times D^{n-3}$ is a $D^{n-3}$-family of $2$-tori. Define a smooth periodic curve $[0,1]\to \DR$ which is constant outside of $S^1\times D^{n-3}$ and restricts for each circle $S^1\times \{x\}$, $x\in D^{n-3}$,  to a periodic curve corresponding to a foliation by lines of constant slope $f(x)$ where $f:D^{n-3}\to [0,1]$ is a non-vanishing smooth function which is $0$ in a neighbourhood of the boundary.  Then fill for each $x\in D^{n-3}$ the resulting foliation of $S^1\times S^1\times\{x\}$ to obtain a $2$-dimensional foliation of $D^2\times S^1\times\{x\}$, and do this in such a way that the foliations fit smoothly together. 
 
 There is one further step to take. Outside  $D^2\times C$   the filling  has to be induced by projection to the second factor, for some compact $C\subset \R^{n-2}$. But the filling for the slope $0$ foliations coming from the insertion of Reeb components are not of this type. So one has to interpolate between these two foliations  close to the boundary of $S^1\times D^{n-3}$. This interpolation is achieved by the introduction of the function $g:D^{n-3} \to [0,1]$ in the formula below, where $g$ is supposed to be smooth, to be $0$ in a neighbourhood of the boundary, and equal to $1$ in the support of $f$. 
 
 Again, this interpolation is easy to understand, as will be explained below.
 
 The filling of the constant slope $f(x)$ foliation on $S^1\times S^1$ is given by the following $1$-form on $D^2\times S^1$. Give $D^2$ polar coordinates $(r,\varphi)$, and let $\{\lambda_0,\lambda_{1/2},\lambda_1\}$ be a smooth partition of unity for $[0,1]$ such that $\lambda_i$ is equal to $1$ in a neighbourhood of $i$, and such that $\lambda_0$ and $\lambda_1$ have disjoint support. Then the 1-form
 $$
 \lambda_1(r)(d\theta-f(x)d\varphi) +\lambda_{1/2}(r)dr+\lambda_0(r)d\theta
 $$
 is completely integrable, equal to $(d\theta-f(x)d\varphi)$ near the boundary and thus defines a filling which, in fact, is  as described above, at least qualitatively. To also take care of the interpolation mentioned above  take for each $x\in \R^{n-2}$ the $1$-form 
 $$
 (1-g(x))d\theta + g(x)(\lambda_1(r)(d\theta-f(x)d\varphi) +\lambda_{1/2}(r)dr+\lambda_0(r)d\theta).
 $$
 Again one checks that the form is nowhere zero and integrable. For those $x$, where $g(x)=0$, the form is equal to $d\theta$ on $D^2\times S^1\times\{x\}$ and thus the sets $D^2\times\{(\theta,x)\}$, $\theta \in S^1$, are leaves of the foliation. Therefore, if $x$ is near the boundary of $D^{n-3}$ the foliation is induced by projection onto $\R^{n-2}$, i.~e. it is horizontal there.  This allows us to extend this foliation to all of $D^2\times\R^{n-2}$ by making it horizontal outside $D^2\times S^1\times D^{n-3}$.
 
 To understand the interpolation mentioned above geometrically look at points $x\in D^{n-3}$ where $f(x)=0$. The form there is equal to $$
 (1-g(x)\lambda_{1/2}(r)) d\theta +g(x)\lambda_{1/2}(r) dr.
 $$ 
 Thus the foliation is horizontal outside the support of $g$ and $\lambda_{1/2}$. Inside the support of  $\lambda_{1/2}$ the slope in the $\theta$-direction will increase when moving from small $g$-values to larger ones. In particular, inside the solid torus $\{r\leq \frac{1}{2}\}\times S^1$ you will see a bubble occuring for each leaf when leaving the set $\{g=0\}$ which grows larger and larger with growing values of $g$. The slopes at the $2$-torus $\{r=\frac{1}{2}\}\times S^1$ are getting  steeper an steeper approaching tangentially more and more this torus, and, finally, being this $2$-torus when $g=1$ is reached.

 As the last requirement for an appropriate filling we have to check (see \ref{transversetofactfill}) that the plane field of this foliation is homotopic to a plane field transverse to the $\R^{n-2}$-factor by a homotopy which is constant near $S^1\times \R^{n-2}$. To do this homotope the defining $1$-form by  replacing for $s\in [0,1]$ the function $\lambda_{1/2}$ by $(1-s)\lambda_{1/2}$ and $\lambda_0$ by $\lambda_0+ s\lambda_{1/2}$. The resulting $1$-forms are all nowhere vanishing and the final form is $d\theta - f(x) d\varphi$,  which is transverse to the $\R^{n-2}$-factor.
 
 \subsection{Filling the hole when $\DR_0$ is simply connected.}\label{subsect:easyfill}
 We denote the path component of  $id$ in  $\DR$ by $\DR_0$. From subsection \ref{ThurExample} we have a fillable periodic path $\alpha$ from $id$ to an $f\not= id$ in $\DR_0$. If $w$ is a fillable periodic path then for any $g\in \DR_0$ the path $g\circ w\circ g^{-1}$ is also fillable. The manifold $\DR$ is modelled  on the space $C^\infty_c(\R^{n-2},\R^{n-2})$ of smooth maps from  $\R^{n-2}$ to itself with compact support given the inductive limit topology of $C^\infty_K(\R^{n-2},\R^{n-2})$, where $C^\infty_K(\R^{n-2},\R^{n-2})$ is the space of smooth maps from $\R^{n-2}$ to itself with support in the compact set $K\subset \R^{n-2}$ with its usual $C^\infty$-topology. Thus continuous paths $w:[0,1]\to \DR$ are homotopic to a finite product  of  paths, each of which is a straight path in a coordinate chart. These paths can be made smooth by parameter changes. Also by the usual smooth approximation methods any continuous homotopy between smooth paths gives rise to a smooth homotopy between these paths.
 
 Therefore, if $\DR_0$ is simply connected and simple, then any smooth periodic path can be filled. In fact, for every $g\in \DR_0$ there are $g_1,\ldots,g_r$ with $g= \Pi_{i=1}^k g_ifg_i^{-1}$ and so any smooth periodic curve from $id$ to $g$ is the concatenation of paths from $id$ to $g_ifg_i^{-1}$, $i=1,\ldots,k$, where every one of these paths are fillable by the Reeb type  construction of \ref{ThurExample}. 
 
 In fact, we can reduce the number of these Reeb fillings to $8$ due to results of Takashi Tsuboi in \cite{Tsu:unifsimp}. Theorem 2.1 of this paper says that any element of $\DR_0$ is the product of two commutators. Using this and Lemma 3.1 of \cite{Tsu:unifsimp} every element of $\DR_0$ can be expressed as a product of eight conjugates of $f$ and $f^{-1}$. 
 
 Since the Reeb fillings of the periodic curve to $f$  of subsection \ref{ThurExample} is very explicit, this gives us an explicit description of how the holes are filled.
 
 \begin{remark}\label{vanishhgroups}
 For $n=4$ and $n=5$ the spaces $\DR_0$ are simply connected so that the above  simple filling of the holes can be done. In fact,  Smale \cite{smale} (see also section 5 of the Appendix of \cite{cerf}) for $k=2$ and  Hatcher \cite{hatch} for $k=3$ show that all homotopy groups of the group $\mathrm{Diff}(D^k,S^{k-1})$ of diffeomorphisms of $D^k$ which are the identity on $S^{k-1}$ are trivial. As a consequence, for $n$ equal to $4$ or $5$ all homotopy groups of $\DR_0$ are trivial. To see this note that $\pi_i\DRk_0\cong \pi_i\mathrm{Diff}_c\mathring{D}^k_0$  is the inductive limit of $\pi_i\mathrm{Diff}_{D_j}\mathring{D}^k$ where $D_j$ is the disk of radius $1-1/j$ in $\mathring{D}^k$. But $\pi_i\mathrm{Diff}_{D_j}\mathring{D}^k =\pi_i \mathrm{Diff}(D_j, J^\infty\partial D_j)$. Here $ \mathrm{Diff}(D_j, J^\infty\partial D_j)$ is the group of diffeomorphisms of $D_j$ which are the identity on $\partial D_j$ and are infinitely tangent to the identity of $D_j$ there. By the special case of Proposition 1 in the Appendix of \cite{cerf} the inclusion $\mathrm{Diff}(D^k,J^r\partial D^k)\to \mathrm{Diff}(D^k,\partial D^k)$ induces for all $0\leq r\leq\infty$  isomorphisms for all homotopy groups, and we are done.
 \end{remark}

\section{Filling the hole. Part II}\label{sect:hole2}
In this section we show how the holes can be filled without assuming that $\DR_0$ is simply connected. The idea is fairly simple. We break the periodic path  in $\DR$ at the boundary of the hole into short pieces of the form $w_i\circ g_i$ where each $w_i$ is a path inside a predescribed contractible neighborhood of the identity and $g_i$ some element of $\DR_0$. Then we show using results from \cite{HalRyb} that each $w_i(1)$ is the product of $6$ commutators of elements in $\DR$ still lying in some prescribed contractible neighbourhood of the identity and then, using not only Lemma 3.1 of \cite{Tsu:unifsimp} but also its proof, we show that each $w_i$  is homotopic to the product of at most $24$ conjugates of the Thurston example from subsection \ref{ThurExample}.Then by Lemma \ref{lem:prodfill} we are done. 

There is one issue that will attribute a little to the length of this section. The authors of \cite{HalRyb} use for the spaces of smooth mappings between smooth manifolds the  topology introduced in Chapter~42 of \cite{KriegMich} which differs for the case of non-compact domain manifolds from the one that most people are used to. Chapter~42 of \cite{KriegMich} uses quite a few notions and results from earlier Chapters which would take  some effort to absorb. But in fact we will need very little from \cite{KriegMich} to understand the arguments of this section. Naturally, one has to invest more if one wants to understand the proof of Proposition 2 in \cite{HalRyb}, the result from \cite{HalRyb} which we need in our proof. 
\subsection{Smooth perfectness of $\DRk_0$ (following Haller, Rybicki and Teichmann).}
\label{subsec:smoothperf} The notion of a differentiable map $f$ from $\R$ into a locally convex vector space $E$ is the obvious one: difference quotients at each point  $x\in\R$ converge in $E$ to a point called $f'(x)$. Iterating this gives the notion of a smooth curve. Similarly one can define smooth maps from  open subsets of $\R^m$ into $E$ by requesting that all partial derivatives exist and are locally bounded. The locally compact vector space of interest to us is the space $C^\infty_c(\R^k,\R^k)$ of smooth maps from $\R^k$ to $\R^k$ with compact support. It is given  the inductive limit topology of the subspaces $C^\infty_K(\R^k,\R^k)$ of smooth maps with support in the compact set $K\subset \R^k$ with the usual $C^\infty$-topology. Any compact subset of $\CRk$ is contained in some $C^\infty_K(\R^k,\R^k)$.  Therefore, for any continuous, and in particular, any smooth map $f: \R\to C^\infty_c(\R^k,\R^k)$ and any bounded $J\subset \R$ there is a compact $K\subset \R^k$ such that $f(J)\subset C^\infty_K(\R^k,\R^k)$. 

The space $\DRk$ is a manifold modelled on $\CRk$. As an atlas take $\{h_g:g+U_g\to U_g\vert g\in \DRk\}$. Here $U_g$ is an open neighbourhood of $0$ in $\CRk$ such that $g+U_g\subset \DRk$ and $h_g(g+f)=f$ for $f\in U_g$. Coordinate changes are then simply restrictions of translations in $\CRk$ to open sets. Clearly, a map $w$ from an open subset of $\R^m$ into $\CRk$ is smooth if and only if  $w+g$ is smooth for any $g\in \CRk$. Thus it makes sense to call a map from a finite dimensional smooth manifold into $\DRk$ smooth if its restriction to inverse images of the above charts are smooth as maps into $\CRk$. 

If we want to consider smoothness of maps from $\DRk$ into some other possibly infinite dimensional manifold modelled on a locally convex vector space we need to decide which maps from an open subset of a  locally convex vector space into another are considered to be smooth. The concept should be so that on finite dimensional ones exactly the usual smooth maps  are smooth.

In \cite{KriegMich} a map between locally convex vector spaces is called smooth if it maps smooth curves to smooth curves. By Bomann's theorem \cite{bomann} (see also Corollary 3.14 in \cite{KriegMich}) a map from an open subset of $\R^m$ into any locally convex space is smooth in this sense if and only if it is smooth in the usual sense. 

We want to point out that it is essential here that we are in the smooth category. In the $C^r$-category with  $r$ finite the corresponding concept will not be sufficient to assure that a map on an open subset of $\R^m$ into $\R$ is smooth in the usual sense (see e.~g. \cite{KriegMich}, Example 3.3; see also the subsection {\it ``Smoothness of foliated products''} in \S 4 of \cite{Tsu:conn}).

The infinite dimensional manifolds for which smoothness structures are developed in \cite{KriegMich} are modelled on so called convenient vector spaces. These are locally convex vector spaces $E$ for which a map $w:\R\to E$ is smooth if and only if for all continuous linear $l:E\to\R$ the map  $l\circ w:\R\to\R$ is smooth. The space $\CRk$ is convenient. This follows for example from Theorem 2.15 in \cite{KriegMich} using the fact that any Banach space is convenient. The authors of \cite{KriegMich} then go one step further by using the $c^\infty$-topology on convenient vector spaces. This, by definition (\cite{KriegMich}, Definition 2.12) is the final topology with respect to all smooth curves. For Fr\'echet spaces the $c^\infty$-topology agrees with the original topology, in particular this holds for $C^\infty_K(\R^k,\R^k)$. But in general, the $c^\infty$-topology is strictly finer. By  Proposition 4.26~(ii) of \cite{KriegMich} the space $\CRk$ with the $c^\infty$-topology is not a topological vector space. Thus the $c^\infty$-topology differs from the original topology.

The $c^\infty$-topology is used in the results of \cite{HalRyb} which we use. But this will not cause any additional difficulty for us. We define a map $f$ defined on a $c^\infty$-open subset $U$ of a locally convex topological vector space into a locally convex vector space $F$ to be smooth if it maps smooth curves in $U$ to smooth curves in $F$. 

It is worthwhile to point out the obvious fact that a smooth map from  a $c^\infty$-open subset $U$ of a locally convex topological vector space into a locally convex vector space $F$ is continuous with respect to the $c^\infty$-topology used for both, $U$ and $F$. In particular, inverse images of open sets of $F$ in its usual topology are open in $U$ with the $c^\infty$-topology.

 In \cite{KriegMich} charts of manifolds modelled on convenient vector spaces are bijections onto $c^\infty$-open subsets. So  $g\in \DRk$ may now have smaller neighbourhoods. In our atlas above we may replace $U_g$ by $c^\infty$-open neighbourhoods of $0$ in $\CRk$.

We can now state the result of \cite{HalRyb} that we are going to use.
\begin{proposition}[Proposition 2 of \cite{HalRyb}]\label{prop:HalRyb} Suppose $k\geq2$, and let $B\subset \R^k$ be open and bounded. Then there exist compactly supported smooth vector fields $X_1,\ldots, X_6$ on $\R^k$, a $c^\infty$-open neighborhood $\cW$ of the identity in $\mathrm{Diff}^\infty_c(B)$, and smooth mappings $\sigma_1,\ldots,\sigma_6:\cW\to \DRk$ so that $\sigma_i(id)=id $ and for all $g\in \cW$,
$$ g=[\sigma_1(g),\textrm{exp}X_1]\circ\cdots\circ[\sigma_6(g),\textrm{exp}X_6].
$$
Moreover the vector fields $X_i$ may be chosen arbitrarily close to $0$ with regard to the strong Whitney $C^\infty$-topology.
\end{proposition}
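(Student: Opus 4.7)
The plan is to derive the factorization via a parametrized version of Mather's displacement trick, after first fragmenting $g$ smoothly into pieces supported in small balls. \textbf{First}, I would establish the existence of a smooth fragmentation on a $c^\infty$-neighborhood $\cW$ of the identity: by choosing a chart at $id$ and a smooth partition of unity $\{\rho_j\}_{j=1}^N$ subordinate to a fine cover $\{B_j\}$ of $\bar B$, one produces for each $g \in \cW$ a decomposition $g = g_1 \circ \cdots \circ g_N$ with $\mathrm{supp}(g_j) \subset B_j$ depending smoothly on $g$ in the convenient sense. Concretely, write $g = \exp(\log g)$ with $\log g$ a compactly supported vector field close to $0$, split $\log g = \sum_j \rho_j \log g$, and take $g_j$ to be the time-$1$ flow of $\rho_j \log g$, composed in a fixed order.

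\textbf{Second}, using the hypothesis $k \geq 2$, I would arrange the cover $\{B_j\}$ so that its members split into at most six subfamilies $\mathcal{C}_1, \ldots, \mathcal{C}_6$, each admitting a simultaneous compressible displacement: for every $i$ there is a compactly supported vector field $X_i$ whose iterates $\exp(nX_i)$, $n \geq 0$, push the union $\bigcup \mathcal{C}_i$ into pairwise disjoint copies lying inside a fixed larger compact set. For $k \geq 2$ such a six-coloring can be manufactured using shifts in two independent coordinate directions on a grid; a combinatorial coloring argument (the technical core of Proposition 2 in \cite{HalRyb}) handles the bookkeeping. Critically, the $X_i$ may be chosen arbitrarily $C^\infty$-small by stretching them across long, slow trajectories, since only the time-$1$ flow image is used in what follows.

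\textbf{Third}, I would apply Mather's displacement trick family by family. Setting $h_i := \prod_{j:\,B_j \in \mathcal{C}_i} g_j$ in a fixed order, the iterated-commutator formula
$$\sigma_i(g) := \prod_{n=0}^{\infty} \exp(nX_i)\, h_i\, \exp(-nX_i)$$
is well defined because on any compact subset of $\R^k$ only finitely many factors act non-trivially. A direct algebraic computation shows $h_i = [\sigma_i(g),\, \exp X_i]$. Combining across $i = 1, \ldots, 6$ and verifying that the $h_i$'s can be recomposed in an appropriate order (or that the coloring is chosen so the subfamilies are composition-order independent up to small commutators that can be absorbed into the $\sigma_i$) yields the desired product of six commutators with $\sigma_i(id) = id$.

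The \textbf{main obstacle} is verifying that each $\sigma_i$ is smooth as a map $\cW \to \DRk$ in the convenient-calculus sense of \cite{KriegMich}, rather than merely continuous or Banach-smooth. The fragmentation step is the delicate one: one must check that $g \mapsto \log g$, $g \mapsto \rho_j \log g$, and the assembly of the resulting time-$1$ flows into the $g_j$'s all send smooth curves in $\cW$ to smooth curves in $\DRk$, i.e.\ that the local-chart formulas involve only smooth operations on $\CRk$. Once this is granted, the infinite product defining $\sigma_i$ reduces on any prescribed compact set to a finite composition, so its smoothness is inherited from the smoothness of composition in $\DRk$. This bookkeeping, together with the explicit six-color decomposition and the construction of the small vector fields $X_i$, is precisely what Haller, Rybicki, and Teichmann carry out in \cite{HalRyb}.
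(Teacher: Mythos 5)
Note first that the paper does not prove Proposition~\ref{prop:HalRyb}: it is stated as ``Proposition 2 of \cite{HalRyb}'' and used as an external black box, the only remark following it being the convention for $[a,b]$ and $\exp X$. So there is no proof in this paper against which to compare your sketch; what you are attempting to reconstruct is the argument of Haller, Rybicki and Teichmann.

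Your template of smooth fragmentation followed by a parametrized Mather commutator trick is indeed the right shape, and the telescoping identity $h_i=[\sigma_i(g),\exp X_i]$ for the iterated-conjugate product is correctly set up. But two of your supporting claims are wrong, and they are where the substance lies rather than mere bookkeeping. First, the infinite product $\sigma_i(g)=\prod_{n\geq 0}\exp(nX_i)\,h_i\,\exp(-nX_i)$ does \emph{not} reduce on every compact set to a finite composition: since the sets $\exp(nX_i)\bigl(\bigcup\mathcal{C}_i\bigr)$, $n\geq 0$, are required to be pairwise disjoint yet lie in a fixed compact set, they necessarily shrink and accumulate at a fixed point $p$ of $\exp X_i$, and every compact neighbourhood of $p$ meets infinitely many of the conjugate supports. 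Showing that the product defines a $C^\infty$ diffeomorphism near $p$, and that it depends smoothly on $g$, requires a quantitative estimate on how conjugation by the increasingly contracting $\exp(nX_i)$ controls all derivatives of $h_i$; this, not the fragmentation, is the analytic heart of the result. Second, the fragmentation $g=g_1\circ\cdots\circ g_N$ does not give $g=h_1\circ\cdots\circ h_6$ if you simply set $h_i=\prod_{j\in\mathcal{C}_i}g_j$, because the $g_j$ belonging to one colour class are interleaved with those of the others and do not commute; ``absorbing small commutators into the $\sigma_i$'' would destroy the smooth dependence on $g$ that the proposition asserts. The coloured cover and the composition order have to be designed compatibly from the outset, and that combinatorial construction, which is what actually produces the constant $6$, is not supplied by your grid-and-shifts heuristic. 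Relatedly, making the $X_i$ small by ``slow trajectories'' only works if the balls $B_j$ and the neighbourhood $\cW$ are shrunk in tandem; since the colour count is fixed, this trade-off has to be tracked explicitly.
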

 
As usual $[a,b]=a\circ b\circ a^{-1}\circ b^{-1}$, and ${exp}(X)$ is the time $1$ map of the flow associated to the complete vector field $X$.

\subsection{From uniform perfectness to uniform simplicity (following Tsuboi).}\label{subsec:perfecttosimple} In this subsection we copy the proof of Lemma 3.1 in \cite{Tsu:unifsimp} practically verbatim for later reference.
\begin{remark*} Takashi Tsuboi pointed out that Lemma 3.1 in \cite{Tsu:unifsimp} can easily be derived from a paper of R. D. Anderson (see the proof of  Lemma~1 in \cite{Anderson}). But what we need we find word for word  in  \cite{Tsu:unifsimp}:  Tsuboi has already distilled exactly what we need from the paper of Anderson. 
\end{remark*}
\begin{lemma}\label{lem:simptrick} Let $U$ be an open subset of the manifold $M$ and let $h$ be a diffeomorphism of $M$ such that $U\cap h(U)=\emptyset$. Let $a,b\in \mathrm{Diff}^\infty(M)$ have support in $U$. Then $[a,b]$ is a product of four conjugates of $h$ and $h^{-1}$. 
\end{lemma}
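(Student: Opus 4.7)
The plan is to exploit the hypothesis $U \cap h(U) = \emptyset$, which (together with its consequence $U \cap h^{-1}(U) = \emptyset$) means that any diffeomorphism supported in $U$ commutes with any diffeomorphism supported in $h(U)$. The strategy has two moves: first rewrite $[a,b]$ as a commutator $[a, c]$ where $c$ is built from $h$, and then expand using the standard identity $[a, xy] = [a,x]\cdot x[a,y]x^{-1}$ with $x$ and $y$ chosen to be conjugates of $h^{\pm 1}$.

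The key first move is the identity $[a, b] = [a, [b, h]]$. To verify it I would directly compute the support of $[b,h] = bhb^{-1}h^{-1}$ and find that it is contained in $U \cup h(U)$, with $[b,h]$ acting as $b$ on $U$, as $hb^{-1}h^{-1}$ on $h(U)$, and as the identity elsewhere (the verification on $U$ uses $U \cap h^{-1}(U) = \emptyset$ so that $b^{-1}$ does not act between the $h^{-1}$ and $h$). Since $a$ is the identity on $h(U)$, the commutator $[a, [b,h]]$ vanishes on $h(U)$; on $U$, where $[b,h]$ coincides with $b$, it reduces to $[a, b]$; and outside $U \cup h(U)$ both sides are the identity.

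Next I would factor $[b, h] = \gamma_1 \gamma_2$ with $\gamma_1 := bhb^{-1}$ (a conjugate of $h$) and $\gamma_2 := h^{-1}$, and apply the commutator identity above to obtain
\[
[a,b] \;=\; [a, \gamma_1]\cdot \gamma_1[a, \gamma_2]\gamma_1^{-1}.
\]
Each $[a, \gamma_i]$ is, purely algebraically, a product of two conjugates of $h^{\pm 1}$: indeed $[a, bhb^{-1}] = (ab)h(ab)^{-1}\cdot bh^{-1}b^{-1}$ and $[a, h^{-1}] = (ah^{-1}a^{-1})\cdot h$. Conjugating the second of these by $\gamma_1$ only modifies the conjugators, so $\gamma_1[a, \gamma_2]\gamma_1^{-1}$ is again a product of two conjugates of $h^{\pm 1}$, explicitly $(bhb^{-1}a)h^{-1}(bhb^{-1}a)^{-1}\cdot (bhb^{-1})h(bhb^{-1})^{-1}$. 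Summing the two contributions yields $[a,b]$ as a product of exactly four conjugates of $h$ and $h^{-1}$.

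The main obstacle is really the choice of $c = [b,h]$ in the first step; once one sees that $[b,h]$ has support in $U \cup h(U)$ and restricts to $b$ on $U$, the rest is routine group-theoretic manipulation. The disjointness hypothesis is used exactly at this one point, and the $2+2$ split of conjugates is forced by the factorization $[b, h] = (bhb^{-1}) \cdot h^{-1}$.
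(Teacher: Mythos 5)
Your proof is correct, but it proceeds by a genuinely different route than the paper's. The paper sets $c := h^{-1}ah$, observes that $c$ has support in $h^{-1}(U)$ which is disjoint from $U$ so that $c$ and $b$ commute, and then carries out a three-line algebraic rearrangement of $aba^{-1}b^{-1}$ (inserting $h\cdot h^{-1}$ and $c^{-1}c$, $b^{-1}b$ cancellations and swapping $cb = bc$) landing directly on the product $h \cdot (ch^{-1}c^{-1}) \cdot (bc)h(bc)^{-1} \cdot (bh^{-1}b^{-1})$. You instead establish the identity $[a,b] = [a,[b,h]]$ by a pointwise support analysis of $[b,h]$ (showing it agrees with $b$ on $U$, with $hb^{-1}h^{-1}$ on $h(U)$, and the identity elsewhere), and then expand $[a,(bhb^{-1})\cdot h^{-1}]$ via $[a,xy] = [a,x]\cdot x[a,y]x^{-1}$; your $[a,b] = [a,[b,h]]$ step hinges on the observation that for a commutator $[a,d]$ only the behavior of $d$ on the support of $a$ matters. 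Both arguments use the same topological input (the disjointness $U\cap h(U)=\emptyset$, hence also $U\cap h^{-1}(U)=\emptyset$) and both produce an explicit expression as four conjugates, though the conjugators differ. The paper's derivation is shorter; yours isolates a reusable lemma, $[a,b]=[a,[b,h]]$ under the disjointness hypothesis, which is the classical trick going back to Anderson, and makes the ``$2+2$'' structure of the four conjugates more transparent.
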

\begin{proof} Consider $c=h^{-1}ah$. Then the support of $c$ and $b$ are disjoint, and consequently they commute. Therefore
$$ 
\begin{array}{rcl}
aba^{-1}b^{-1} & = & h(h^{-1}ah)h^{-1} (ba^{-1}b^{-1})\\
& = & h(ch^{-1}c^{-1})(cbhc^{-1}b^{-1})(bh^{-1}b^{-1})\\
& = & h(ch^{-1}c^{-1})(bchc^{-1}b^{-1})(bh^{-1}b^{-1}).
\end{array}
$$
\end{proof}
\begin{corollary}\label{cor:perftosimp} Let $M^k$ be a smooth connected manifold and let $h\in \mathrm{Diff}^\infty(M^k)$ be any element different from the identity. Let $a_i,b_i$, $i=1,\ldots,r$, be elements of $\mathrm{Diff}^\infty(M^k)$ such that for each $i$ the diffeomorphisms  $a_i$ and $b_i$ have support in $\mathring{U}_i$ where $U_i$ is a  closed $k$-ball in $M^k$.

Then $f:=\prod\limits^r_{i=1}[a_i,b_i]$ is a product of $4r$ conjugates of $h$ and $h^{-1}$.
\end{corollary}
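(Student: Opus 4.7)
The plan is to reduce each commutator $[a_i,b_i]$ to the situation of Lemma \ref{lem:simptrick} by conjugating the $i$-th ball $U_i$ into a small region disjoint from its $h$-image. Concretely, since $h$ is not the identity, there is a point $p\in M^k$ with $h(p)\neq p$, hence an open set $V\subset M^k$ with $\overline{V}\cap h(\overline{V})=\emptyset$. Without loss of generality, $V$ can be taken to be the interior of a small closed $k$-ball.

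Next, using connectedness of $M^k$ together with the fact that each $U_i$ is a closed $k$-ball, I would construct for each $i$ a diffeomorphism $\phi_i\in\mathrm{Diff}^\infty(M^k)$ such that $\phi_i(U_i)\subset V$. This is a standard application of the isotopy extension theorem: any closed embedded $k$-ball in a connected manifold can be isotoped to any prescribed small closed $k$-ball, and that ambient isotopy integrates to the desired $\phi_i$. Once $\phi_i$ is in hand, the conjugated diffeomorphisms $\phi_i a_i\phi_i^{-1}$ and $\phi_i b_i\phi_i^{-1}$ have support in $\phi_i(\mathring{U}_i)\subset V$, and $V\cap h(V)=\emptyset$.

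Applying Lemma \ref{lem:simptrick} to the pair $(\phi_i a_i\phi_i^{-1},\phi_i b_i\phi_i^{-1})$ expresses
\[
\phi_i[a_i,b_i]\phi_i^{-1}=[\phi_i a_i\phi_i^{-1},\phi_i b_i\phi_i^{-1}]
\]
as a product of four conjugates of $h$ and $h^{-1}$, say $\prod_{j=1}^{4}g_{i,j}h^{\varepsilon_{i,j}}g_{i,j}^{-1}$ with $\varepsilon_{i,j}\in\{\pm 1\}$. Conjugating back by $\phi_i^{-1}$ gives
\[
[a_i,b_i]=\prod_{j=1}^{4}(\phi_i^{-1}g_{i,j})\,h^{\varepsilon_{i,j}}\,(\phi_i^{-1}g_{i,j})^{-1},
\]
so each factor $[a_i,b_i]$ is itself a product of four conjugates of $h^{\pm 1}$. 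Multiplying over $i=1,\ldots,r$ realizes $f=\prod_{i=1}^{r}[a_i,b_i]$ as a product of $4r$ such conjugates, proving the corollary.

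The only non-routine point is the existence of $\phi_i$ with $\phi_i(U_i)\subset V$; everything else is essentially algebraic manipulation using Lemma \ref{lem:simptrick}. That existence uses two standard facts of differential topology on a connected manifold: any two smoothly embedded closed $k$-balls can be moved onto each other by an ambient diffeomorphism (via a path of points joining them and isotopy extension, combined with a radial rescaling to shrink the image inside $V$). In the application we actually need this for finitely many $i$, which poses no additional difficulty since each $i$ is handled independently.
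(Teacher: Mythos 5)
Your proposal is correct and follows essentially the same argument as the paper: choose a small ball disjoint from its $h$-image, use connectedness to move each $U_i$ into that ball by an ambient diffeomorphism, apply Lemma \ref{lem:simptrick} to the conjugated commutator, and conjugate back.
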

\begin{proof} Since $h\neq id$ there exists a closed $k$-ball $U$ in $M$ with $h(U)\cap U=\emptyset$. Since $M$ is connected there is for each $i$ a diffeomorphism $g_i$ of $M$ smoothly isotopic to $id$ by a compactly supported isotopy, such that $g_i(U_i)\subset U$. Then by Lemma \ref{lem:simptrick} $g_i[a_i,b_i]g_i^{-1}$ is a product of four conjugates of $h$ and $h^{-1}$, and thus $[a_i,b_i]$ is a product of four conjugates of $h$ and $h^{-1}$.
\end{proof}

\subsection{Putting it all together.}\label{finalstep} We finally show that every smooth periodic curve $w:[0,1]\to\DR$ can be filled. 

As explained in the introduction of this section, the idea is to write $w$ as the product of periodic paths $w_i$ where each $w_i$ is contained in a contractible neighbourhood of $id$ in $\DR$. We then proceed as in Subsection \ref{subsect:easyfill} by working entirely with paths which are homotopic to paths contained in this contractible neighbourhood. That this can be done is a consequence of the results \ref{prop:HalRyb}, \ref{lem:simptrick}, and \ref{cor:perftosimp} of the previous two subsections. 

To shorten notation a little let $k:=n-2$. Thus $k\geq 2$. 

\begin{itemize}
\item[(1)] Let $V:=\{id + e\vert e\in \CRk \textrm{ with } \max\limits_x \|de_x\| <1\}$. This is an open contractible neighbourhood of $id$ in $\DRk$. An easy estimate shows that the composition  of $r$ elements of $\{id + e\vert e\in \CRk \textrm{ with } \max\limits_x \|de_x\| <\varepsilon\}$ is of the form $id + f$ with $\|df_x\|<(1+\varepsilon)^r-1$ for all $x$. Therefore there is $\varepsilon>0$ such that any composition of $72$ elements of $V_0:=\{id + e\vert e\in \CRk \textrm{ with } \max\limits_x \|de_x\| <\varepsilon\}$ is contained in $V$.
\item[(2)] Recall from Subsection \ref{ThurExample} that the periodic curves which get filled are of  the form
$$
h_f(t)(z)=\left\{\begin{array}{lcl}
z&,&z\not\in S^1\times D^{k-1}\\
(\theta+t\cdot f(x),x)&,&z=(\theta,x)\in S^1\times D^{k-1}
\end{array}
\right.
$$
where $f: D^{k-1}\to [0,1]$ is smooth and vanishing near the boundary. We have $h_f(t)^{-1}=h_{-f}(t)$ for all $t\in [0,1]$. Thus by choosing $f$ small enough, we may assume that $h_f(t), h_f(t)^{-1} \in V_0$ for all $t\in[0,1]$. With this choice of $f$ we denote $h_f(1)$ by $h$.
\item[(3)] Let $U\subset S^1\times D^{k-1}$ be an open ball such that $U\cap h(U)=\emptyset$. Let $B\subset \R^k$ be an open ball containing $S^1\times D^{k-1}$ and $A$ an open ball in $\R^k$ containing $\bar{B}$. 
\item[(4)] Let $g_t$ be a compactly supported isotopy of $\R^k$ with $g_0= id$ and $g_1(\bar{A})\subset U$. Denote $g_1$ by $g$.
\item[(5)] Finally let $\cW\subset V$ be a $c^\infty$-open neighbourhood of $id$  in $\mathrm{Diff}^\infty_c(B)$ and $\sigma_1,\ldots,\sigma_6: \cW\to \DR$  and $X_1,\ldots, X_6$ the smooth maps and compactly supported vectorfields of Proposition \ref{prop:HalRyb}. Since $A$ is diffeomorphic to $\R^k$ we may assume that the $\sigma_i$ factor through $\mathrm{Diff}^\infty_c(A)$ and that the $X_i$ have support in $A$. Furthermore, by making $\cW$ small enough, we may assume that for $a\in\cW$ the elements $h^{-1}\circ g\circ \sigma_i(a)\circ g^{-1}\circ h$ and their inverses are all in $V_0$. Also, by having $X_i$ close enough to $0$ in the strong Whitney $C^\infty$-topology, we may assume that all $g\circ exp(X_i)\circ g^{-1}$ and their inverses are in $V_0$.
\end{itemize}
Now let $w:[0,1]\to \DRk$ be a smooth periodic curve. The curve $w$ is fillable if (and only if) the curve $g\circ w\circ g^{-1}$ is fillable. Here $g$ is the diffeomorphism from item (4) above.

By the very definition of the $c^\infty$-topology $g\circ w\circ g^{-1}$ is continuous with respect to this topology on $\DRk$. Therefore, we find a $q\in\N$ such that for $i=0,\ldots,q-1$ the curves $gw_ig^{-1}:[0,1]\to \DRk$ defined by
$
w_i(t):= w(\frac{i+t}{q})\circ w(\frac{i}{q})^{-1}
$
have their images in $\cW$. Up to parametrization $gwg^{-1}$ is the concatenation of the $gw_ig^{-1}$. By making $w$ horizontal near the points $i/q$ the $gw_ig^{-1}$ are periodic.

 By the proof of Corollary \ref{cor:perftosimp} each commutator of two diffeomorphisms $g\alpha g^{-1}$ and $g\beta g^{-1}$ is a product of four conjugates of $h$ and $h^{-1}$, if the support of $\alpha$ and $\beta$ is in $A$, and by the proof of Lemma \ref{lem:simptrick} we have an explicit description of these factors. 

For the case of interest to us it works out as follows. Let $a\in \cW$ and $i=1,\ldots,6$. Then
$g[\sigma_i(a), exp X_i]g^{-1}
$
equals
$$
\begin{array}{l}
 h\circ ((h^{-1}g\sigma_i(a) g^{-1}h)\circ h^{-1}\circ(h^{-1}g\sigma_i(a)^{-1} g^{-1}h))\\
\circ ((g\, exp X_i g^{-1})\circ (h^{-1}g\sigma_i(a) g^{-1}h)\circ h\circ (h^{-1}g\sigma_i(a)^{-1} g^{-1}h)\circ(g\, exp X_i^{-1} g^{-1}))\\
\circ((g\, exp X_i g^{-1})\circ h^{-1}\circ(g\, exp X_i ^{-1}g^{-1})).
\end{array}
$$
This is the composition of 12 diffeomorphisms with each one lying in $V_0$ by item (5) above if $a\in \cW$. Since there are six of these commutators to express $a$ we look at a composition of $72$ diffeomorphisms of $V_0$. By item (1) this composition is contained in $V$. The same holds if we replace the first $h$ and the occurrences of the $h$ or $h^{-1}$ in the center of the outer parantheses of the other three factors by $h_f(t)$ respectively $h_f(t)^{-1}$. This follows from item (2) above.

Having done this, we obtain a smooth periodic curve $u$  in $V$ running from $ id $ to $gw_i(1)g^{-1}$. Thus $u$ and $gw_ig^{-1}$ are homotopic. 

The curve $u$ is the product of $24$ fillable curves with the product being composition in $\DRk$ instead of concatenation. But up to homotopy there is no difference. In fact, if each of these curves is horizontal near $0$ and $1$ the standard homotopy between them is smoothly periodic.

We are done.

\section{Application: $\pi_{k+1}B\overline{\Gamma}_k=0$}\label{sect:applE}
As mentioned in the introduction the proof that any $2$-plane field is homotopic to a foliation follows immediately from the main result, Theorem 1 in \cite{Thur} once you know that the homotopy fibre $B\bar{\Gamma}_k$ of the normal bundle map $\nu:B\Gamma_k\to BGL_k$, which assigns to a smooth $\Gamma_k$-structure its normal bundle, is $(k+1)$-connected. Mildly reworded Theorem 1 of \cite{Thur} can be formulated as:
\begin{theorem}[Thurston]\label{thm:Thurhprinc} Let $M$ be a smooth manifold and $TM=\tau\oplus\rho$ a splitting of its tangent bundle such that $\rho$ is the normal bundle of a Haefliger structure $\cH$ of codimension $k\geq 2$ on $M$. Then there exists a smooth foliation $\F$ of codimension $k$ on $M$ which is homotopic to $\cH$ as a Haefliger structure and whose tangent bundle is homotopic to $\tau$ as a subbundle of $TM$. Furthermore, if $\cH$ is already the Haefliger structure  of a foliation in a neighbourhood of a compact $K\subset M$ then the homotopies between $\F$ and $\cH$ and between the tangent field of $\F$ and $\tau$ can be chosen to be constant in a neighbourhood of $K$.
\end{theorem}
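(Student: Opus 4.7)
The plan is to follow William Thurston's original strategy from \cite{Thur}, which differs from the direct approach used above for $2$-plane fields by working on the product of the total space of $\rho$ with an interval rather than on $M$ itself. Writing $E$ for the total space of $\rho$, I would reformulate the goal as producing a codimension-$k$ foliation on $E\times[0,1]$ which restricts at $t=0$ to the pullback of $\cH$ (which on $E$ is in particular transverse to the fibres of $\pi:E\to M$, so near $E\times\{0\}$ it is already well understood) and whose tangent plane field at $t=1$ is the pullback of $\tau$ along the zero section. Restricting the resulting foliation to the graph of the zero section at $t=1$ would then recover the desired foliation $\F$ on $M$, and the homotopies demanded in the statement would be read off from the two interval slices. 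The relative version of the theorem, where $\cH$ is already a foliation near a compact set $K$, is arranged by requiring that the foliation on $E\times[0,1]$ be the obvious product foliation over a neighbourhood of $K\times[0,1]$ before one starts.

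Next, mirroring Sections \ref{sect:genposition} and \ref{sect:civilization}, I would choose a smooth triangulation of $E\times[0,1]$ that is (locally) collapsible onto $E\times\{0\}$ and is in general position with respect to the pullback of $\tau$, then civilize that pullback near the $(n-1)$-skeleton. Collapsibility is the essential geometric tool here: it lets one build $E\times[0,1]$ from $E\times\{0\}$ by a sequence of elementary simplicial expansions, each one the reverse of an elementary collapse and therefore consisting of attaching a single simplex along a free face. Starting from the product foliation on a neighbourhood of $E\times\{0\}$ (which is just the pulled-back Haefliger structure seen as a genuine foliation transverse to the fibres), I would perform Thurston's \emph{inflation} procedure, extending the already-built foliation across one elementary expansion at a time. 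Because of collapsibility, at each such expansion the boundary data on the newly attached simplex lies on a collapsible, and in particular contractible, subset of its boundary sphere, which already gives much tighter control over the ``hole'' than what we had in the direct approach of Section~\ref{sect:hole1}.

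The hard step is the inflation itself: given a civilized codim-$k$ plane field on $D^{n-k+1}\times D^k$ integrable near the boundary, with the integrable part already a foliated $\R^k$-product over a collapsible piece of $\partial D^{n-k+1}$, extend it to a foliation on the full product. For $2$-plane fields the main text reduced this to filling an individual loop in $\DRk$ via the Mather-Thurston-Epstein perfectness theorem in the form of \cite{HalRyb} together with Tsuboi's Lemma~\ref{lem:simptrick} and the explicit Thurston-Reeb filling of Subsection~\ref{ThurExample}. For arbitrary codimension the corresponding obstruction is a family of foliated $\R^k$-products parametrised by (a collapsible piece of) $S^{n-k}$, so the required decomposition into conjugates of the Thurston example must be performed uniformly in a parameter rather than for a single diffeomorphism. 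This is exactly where the full Mather-Thurston theorem enters, identifying $\Omega^k B\overline{\Gamma}_k$ with the classifying space of $\overline{\mathrm{Diff}}_c\R^k$ up to homology after group completion, and translating the filling problem into a statement about $\pi_\ast$ of $B\mathrm{Diff}^\delta_c\R^k$ which is controlled by the perfectness of the universal cover of $\mathrm{Diff}^\infty_c\R^k_0$. I expect this family version of the perfectness/simplicity argument of Subsections \ref{subsec:smoothperf}--\ref{subsec:perfecttosimple} to be the main obstacle: the commutator decompositions have to be chosen to depend smoothly on the sphere parameter, and the explicit Thurston-Reeb filling has to be bundled over that parameter in a way that respects the collapsibility of the triangulation.
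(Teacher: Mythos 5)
The paper never actually proves Theorem~\ref{thm:Thurhprinc}. In Section~\ref{sect:applE} it is introduced as ``mildly reworded Theorem~1 of \cite{Thur}'' and used only as background for the implication the authors want to reverse; no argument for it appears anywhere in the text. So there is no in-paper proof to compare your sketch against, and it has to be measured against what the paper reports about Thurston's argument in~\cite{Thur}, namely: work on $E\times[0,1]$, use a triangulation that collapses onto $E\times\{0\}$, civilize, and inflate.

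Your sketch matches that setup, but it misplaces the algebraic input in a way that would send you down the wrong road. You claim that at the inflation step ``the full Mather-Thurston theorem enters'' and that the commutator decomposition of Subsections~\ref{subsec:smoothperf}--\ref{subsec:perfecttosimple} must be performed ``uniformly in a parameter.'' That is exactly the work that collapsibility is engineered to eliminate. At each inflation step one attaches a single simplex along a free face, so the already-constructed foliation is given on a collapsible disk inside the boundary sphere rather than on the whole sphere; the residual boundary data for the hole is then trivial, or reduces to Thurston's one explicit Reeb-type filling of Subsection~\ref{ThurExample}, and no decomposition of an arbitrary element of $\DRk$ into conjugates or commutators is required. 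This is precisely the contrast the introduction and the opening of Section~\ref{sect:hole1} draw: Thurston's filling is ``quite simple,'' and it is only because the present paper gives up collapsibility (by working on $M$ directly, which is what makes the result purely about plane fields) that the authors need \cite{HalRyb} and Tsuboi's lemma in Section~\ref{sect:hole2}. The Mather-Thurston theorem and the perfectness of the universal cover of $\DRk_0$ are used elsewhere in the logic: they show $B\bar{\Gamma}_k$ is $(k+1)$-connected, which verifies the \emph{hypothesis} of Theorem~\ref{thm:Thurhprinc} (that $\rho$ is the normal bundle of a Haefliger structure) when one deduces Corollary~3 of \cite{Thur}, i.e.\ Theorem~\ref{thm:main}, from it. Putting them inside the proof of the h-principle itself, as you do, is not only unnecessary but also partly circular, since $B\bar{\Gamma}_k$ is built out of the very Haefliger structures that the theorem is supposed to turn into genuine foliations.
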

This theorem is sometimes called Thurston's $h$-principle for foliations of codimension greater than $1$.

Thus,  a given $p$-plane field on the $n$-manifold $M$ is homotopic to a foliation if its normal bundle $\rho$ comes from a $\Gamma_{n-p}$-structure, i. e. if the classifying map $f_\rho: M\to BGL_{n-p}$ of $\rho$ can be liftet to $B\Gamma_{n-p}$. This can always be done if $B\bar{\Gamma}_{n-p}$ is $(n-1)$-connected. 

Thus any $2$-plane field on any manifold can be homotoped into a foliation if $B\bar{\Gamma}_k$ is $(k+1)$-connected, and any $p$-plane field on any manifold is homotopic to a foliation if $B\bar{\Gamma}_k$ is $(k+p-1)$-connected. It is known that $\pi_{2k+1}(B\bar{\Gamma}_k)$ is highly non trivial, and it is a long standing open question whether $B\bar{\Gamma}_k$ is $2k$-connected.

Haefliger in \cite{Hae:ouvert} used the Gromov-Phillips Transversality Theorem (see below and \cite{Grom}, \cite{Phil}) to show that $B\bar{\Gamma}_k$ is $k$-connected. The proof that $\pi_{k+1}B\bar{\Gamma}_k=0$  is more involved. It uses the Thurston-Mather Theorem which states that the adjoint $B\overline{\mathrm{Diff}}^\infty_c(\R^k)\to \Omega^kB\bar{\Gamma}_k$ of the natural map $\Sigma^nB\overline{\mathrm{Diff}}^\infty_c(\R^k)\to B\bar{\Gamma}_k$ is a homology equivalence, and it uses Thurston's theorem \cite{ThurBull} that the universal covering group of $\DRk_0$ is perfect. The last theorem is also used by us to prove our main result (see Subsection \ref{subsec:smoothperf}).

We close this article by showing that the vanishing of $\pi_{k+1}B\bar{\Gamma}_k$  for $k\geq2$ is a direct consequence of the Gromov-Phillips Transversality Theorem and our main theorem \ref{thm:main}. Our proof follows closely Haefliger's  proof in \cite{Hae:ouvert} that $ \pi_{k}B\bar{\Gamma}_k=0$. In fact our argument generalizes immediately to prove the following 

\begin{theorem}\label{intfieldstovanishing} Let $p\geq 0$ and assume that any  $p$-plane field on any smooth manifold $M$, which in a neighbourhood of the compact subset $K$ of $M$ is the tangent plane field of a smooth foliation, is homotopic to the plane field of a smooth foliation by a homotopy which is constant in a neighbourhood of $K$.

Then $\pi_{k+p-1}B\bar{\Gamma}_k=0$

\end{theorem}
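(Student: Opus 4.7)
The plan is to generalize Haefliger's proof in \cite{Hae:ouvert} that $\pi_k B\overline{\Gamma}_k=0$. An element of $\pi_{k+p-1}B\overline{\Gamma}_k$ is represented by a map $f\colon S^{k+p-1}\to B\overline{\Gamma}_k$, equivalently by a codimension-$k$ Haefliger structure $\mathcal{H}$ on $S^{k+p-1}$ together with a trivialization of its normal bundle $\nu$. To show $[f]=0$ I will construct an extension of $f$ over $D^{k+p}$.

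First I view $S^{k+p-1}$ as the unit sphere in $\R^{k+p}$ and pull $(\mathcal{H},\nu)$ back along the radial retraction $\R^{k+p}\setminus\{0\}\to S^{k+p-1}$ to a Haefliger structure $\mathcal{H}_V$ with trivialized normal bundle $\nu_V$ on the open manifold $V=\R^{k+p}\setminus\{0\}$; I then extend $\nu_V$ to a trivialized rank-$k$ bundle $\tilde{\nu}$ on all of $\R^{k+p}$. Using the global trivialization of $T\R^{k+p}$, I pick a bundle epimorphism $\phi\colon T\R^{k+p}\to\tilde{\nu}$ whose kernel $\tau$ is a globally defined $p$-plane field on $\R^{k+p}$. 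On the open manifold $V$, the splitting $TV=\tau|_V\oplus\nu_V$ realizes $\nu_V$ as the normal bundle of $\mathcal{H}_V$, so Thurston's $h$-principle (Theorem~\ref{thm:Thurhprinc}) produces a codimension-$k$ foliation $\mathcal{F}$ on $V$ whose underlying framed Haefliger structure is homotopic to $(\mathcal{H}_V,\nu_V)$ and whose tangent $p$-plane field $\tau_{\mathcal{F}}$ is homotopic to $\tau|_V$ as a subbundle of $TV$.

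Using a smooth bump function in the radial coordinate I interpolate along the Thurston homotopy between $\tau$ and $\tau_{\mathcal{F}}$ to obtain a $p$-plane field $\tau'$ on $\R^{k+p}$ that equals $\tau_{\mathcal{F}}$ on a closed collar neighbourhood $K$ of $S^{k+p-1}$ and equals $\tau$ outside a slightly larger collar; in particular $\tau'$ is integrable on $K$. I then invoke the hypothesis of the theorem with $M=\R^{k+p}$ and this $K$: the plane field $\tau'$ is homotopic, by a homotopy constant on a neighbourhood of $K$, to the tangent plane field $\tau''$ of a smooth foliation $\mathcal{F}''$ of $\R^{k+p}$. Because the homotopy is constant near $K$, $\mathcal{F}''$ coincides with $\mathcal{F}$ on a neighbourhood of $S^{k+p-1}$, and the restriction of $\mathcal{F}''$ to $D^{k+p}$ is a foliation whose associated Haefliger structure on $\partial D^{k+p}=S^{k+p-1}$ is homotopic to $\mathcal{H}$. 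Transporting the framing of $\tilde{\nu}$ through the Thurston homotopy on $V$ and through the hypothesis homotopy from $\tau'$ to $\tau''$ yields a trivialization of $TD^{k+p}/\tau''$ which on the boundary is homotopic to the given trivialization of $\nu$. After a routine adjustment on a thin collar making this agreement literal, one obtains the desired extension $D^{k+p}\to B\overline{\Gamma}_k$ of $f$, and hence $[f]=0$.

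The main point requiring care is to carry out the entire argument in the category of Haefliger structures with trivialized normal bundle, rather than simply of Haefliger structures: one has to invoke Thurston's $h$-principle in a form that preserves the framing of the normal bundle, and one has to track, through the homotopy of $p$-plane fields furnished by the theorem's hypothesis, a coherent family of isomorphisms of the quotients $TM/\tau_t$ so that the initial framing propagates to a framing of $TD^{k+p}/\tau''$ agreeing on the boundary with the one we started with.
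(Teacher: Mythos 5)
Your argument has the same overall skeleton as the paper's: realize the framed Haefliger structure on $S^{k+p-1}$ as a foliation on an open manifold deformation-retracting onto the sphere, spread it out to produce a $p$-plane field on $D^{k+p}$ (or $\R^{k+p}$) that is integrable near the boundary sphere, and then invoke the hypothesis to extend to an honest foliation. But in the crucial step of producing that foliation on the open manifold you invoke Thurston's $h$-principle, Theorem~\ref{thm:Thurhprinc}, whereas the paper uses the Gromov--Phillips Transversality Theorem, Theorem~\ref{thm:grromphil}, applied to $M=S^{k+p-1}\times(0,2)$ mapping into the microfoliation $(E,\cG)$ representing $\cH$. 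This is not a cosmetic difference. The explicit goal of this section, as stated just before Theorem~\ref{intfieldstovanishing}, is to derive $\pi_{k+p-1}B\bar{\Gamma}_k=0$ ``as a direct consequence of the Gromov--Phillips Transversality Theorem and our main theorem,'' precisely in order to avoid the Thurston--Mather machinery. But Theorem~\ref{thm:Thurhprinc} is Thurston's Theorem 1, whose proof rests on the Thurston--Mather theorem, and which together with the very connectedness statement you are proving already yields Theorem~\ref{thm:main} as a corollary (this is how Corollary 3 of \cite{Thur} is obtained). So your route, while logically non-circular when read as an abstract implication, re-imports the heavy tool the section is designed to dispense with; it also inherits the restriction $k\geq 2$ from Theorem~\ref{thm:Thurhprinc}, whereas the paper's proof, modeled on Haefliger's argument in \cite{Hae:ouvert}, works uniformly in $k$ and specializes to give Theorem~\ref{thm:connlessk} as well.

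Two smaller points. First, ``the splitting $TV=\tau|_V\oplus\nu_V$'' needs a word: $\nu_V$ is an abstract bundle, and to quote Theorem~\ref{thm:Thurhprinc} you must first embed it as a complement to $\tau|_V$ inside $TV$ via the isomorphism $TV/\tau|_V\cong\nu_V$ coming from $\phi$. Second, the framing issue you flag at the end is handled much more concretely in the paper: the bundle epimorphism furnished by Gromov--Phillips comes with a homotopy $\tilde f_t$ whose kernels give the interpolating family of plane fields \emph{and} whose target is $\nu_\cG$ with a fixed trivialization $\varphi$, so the compatibility of framings is read off directly; moreover the plane field is taken to be the constant field $\ker p_k$ near the origin, which forces the normal framing on small spheres to be the identity and hence to extend over the ball without adjustment. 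Your proposal punts this to ``a routine adjustment on a thin collar,'' but it is precisely the point that requires the explicit bookkeeping the paper supplies, and with Theorem~\ref{thm:Thurhprinc} as stated (which controls the Haefliger structure and the tangent plane field by two a priori unrelated homotopies, with no mention of framings) the bookkeeping is not as readily available.
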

Here, as before, $\Gamma_k$ is the space of germs of local $C^\infty$-diffeomorphisms of $\R^k$ with the germ topology.

The Gromov-Phillips Theorem as stated in \cite{Phil}, a special case of the main result in \cite{Grom}, suffices for our needs. It says the following:
\begin{theorem}[Gromov, Phillips] \label{thm:grromphil} Let $M$ and $N$ be smooth manifolds and $\cG$ a foliation of $N$. Let $\mathrm{Trans}(M;(N,\cG))$ be the space of smooth maps from $M$ to $N$ which are transverse to $\cG$ with the $C^1$ compact open topology, and let $\mathrm{Epi}(TM, \nu_\cG)$ be the space of continuous bundle epimorphisms from the tangent bundle $TM$ of $M$ to the normal bundle $\nu_\cG$ of $\cG$ with the compact open topology. 

Then the natural map which maps $g\in \mathrm{Trans}(M;(N,\cG))$ to $\mathrm{pr}\circ dg\in \mathrm{Epi}(TM, \nu_\cG)$ is a weak homotopy equivalence if $M$ is open (i. e. if $M\setminus\partial M$ has no compact component).

Here $\mathrm{pr}:TN\to \nu_\cG$ is the obvious projection.
\end{theorem}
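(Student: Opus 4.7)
The plan is to view the theorem as an instance of Gromov's $h$-principle for open, $\mathrm{Diff}$-invariant differential relations on open manifolds, and to reduce the global problem to the classical Phillips submersion theorem via foliation charts. The relation of interest is the open subset $\mathcal{R}\subset J^1(M,N)$ consisting of $1$-jets $j\in J^1_x(M,N)$ with base point $y\in N$ whose underlying linear map $T_xM\to T_yN$ composes with $\mathrm{pr}_y:T_yN\to(\nu_\cG)_y$ to a surjection. Holonomic sections of $\mathcal{R}\to M$ are precisely elements of $\mathrm{Trans}(M;(N,\cG))$; arbitrary (continuous) sections deformation-retract onto the space of bundle epimorphisms $\mathrm{Epi}(TM,\nu_\cG)$ (the target component being forgotten up to homotopy via the contractibility of fibers of $N\to \mathrm{point}$ once one fixes a section $M\to N$). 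Thus the theorem is exactly the assertion that the inclusion of holonomic sections into formal sections of $\mathcal{R}$ is a weak homotopy equivalence for open $M$.

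Next I would reduce locally to the submersion case. Cover $N$ by foliation charts $\phi:W\xrightarrow{\cong}\R^{n-k}\times\R^k$ in which $\cG$ is given by the fibers of the projection $\pi:W\to\R^k$; over such a $W$ the normal bundle $\nu_\cG$ is canonically trivialized by $d\pi$. A smooth $g:M\supset g^{-1}(W)\to W$ is transverse to $\cG$ if and only if $\pi\circ g$ is a submersion, and the formal data $\mathrm{pr}\circ dg$ corresponds exactly to the fiberwise epimorphism $d(\pi\circ g):TM\to\R^k$. So, locally, the statement reduces to the classical Phillips submersion theorem: for open $M$, the map $\mathrm{Sub}(M,\R^k)\to\mathrm{Epi}(TM,\underline{\R^k})$, $f\mapsto df$, is a weak equivalence.

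For globalization I would use that $M$ is open to choose a handle decomposition (equivalently a smooth triangulation) with all handles of index strictly less than $m=\dim M$, so $M$ deformation retracts onto a subpolyhedron $P\subset M$ with $\dim P<m$. One then proves the $h$-principle by induction over handles: given a formal section $F$ and a transverse map $g_0$ defined on a neighborhood of the $(i{-}1)$-skeleton whose formal data agrees with $F$ there, extend across each $i$-handle $D^i\times D^{m-i}$, $i<m$. The extra fiber direction $D^{m-i}$ with $m-i\geq 1$ gives the room needed: along $D^i\times\{0\}$ the formal epimorphism supplies a $k$-dimensional quotient of the tangent space, and using the foliation chart one constructs a submersion into $\R^k$ extending the one already defined near $\partial D^i\times D^{m-i}$, concatenating/convexly combining with $g_0$ by a standard fiberwise straight-line homotopy, possible because surjective linear maps form an open convex cone. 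Running the same argument parametrically (replacing $M$ by $M\times D^q$ and applying the relative version over $M\times\partial D^q$, still open) yields surjectivity and injectivity on all homotopy groups $\pi_q$.

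The main obstacle is the handle-extension step, i.e.\ the local-to-global argument along a single handle of index $i<m$. The difficulty is to perform the extension so that the resulting map is genuinely transverse (not merely formally so) while simultaneously realizing, up to homotopy rel boundary, the prescribed formal bundle epimorphism. This is precisely where openness of $M$ enters and where the submersion theorem does the real work: in the fiber direction $D^{m-i}$ one uses Gromov's observation that the space of surjective linear maps $\R^m\to\R^k$ is $(m-k-1)$-connected and in fact convex modulo its kernel, which permits a controlled convex-integration-style construction. Once this extension lemma is in hand, the induction on handles and the parametric version both go through mechanically, yielding the stated weak equivalence.
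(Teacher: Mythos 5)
The paper does not prove this statement at all: it is quoted verbatim as a known result of Gromov and Phillips (\cite{Grom}, \cite{Phil}) and only applied in Section 6, so there is no internal proof to compare your proposal with; it has to be measured against the proofs in those references (equivalently Haefliger--Po\'enaru or the holonomic approximation treatment). Your overall architecture does match that literature: encode transversality to $\cG$ as an open, $\mathrm{Diff}(M)$-invariant first-order relation, observe that formal sections are (weakly) the same as $\mathrm{Epi}(TM,\nu_\cG)$ because the extra lift to $TN$ varies in a contractible affine space, retract the open manifold $M$ onto a complex of dimension $<\dim M$, and reduce locally to the submersion theorem in foliation charts (after subdividing so that each handle is mapped into a single chart, a point you should state).

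The genuine gap is the handle-extension step, which you yourself isolate as the main obstacle and then justify by two claims that are false or inapplicable. First, surjective linear maps $\R^m\to\R^k$ do not form a convex cone: $A$ and $-A$ are both surjective while their average is $0$, so the proposed fiberwise straight-line interpolation between a genuine solution near $\partial D^i\times D^{m-i}$ and a formal one can leave the relation. Second, a convex-integration-style construction does not apply here, because the epimorphism relation is not ample: varying one column of a $k\times m$ matrix whose other columns span only a $(k-1)$-dimensional subspace $W\subset\R^k$, the admissible values are $\R^k\setminus W$, whose two components are half-spaces and do not have full convex hull. The $(m-k-1)$-connectivity of the Stiefel manifold is likewise not the mechanism that fills the handle. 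What actually carries the day in Gromov's and Phillips' proofs is the flexibility (compression/sharp movability) of the sheaf of transverse maps, exploiting $\mathrm{Diff}(M)$-invariance and the positive-codimension core of an open manifold --- i.e.\ precisely the lemma you defer. As it stands, your proposal reduces the theorem to an unproved extension lemma and proposes a mechanism for it that would fail, so the core of the argument is missing.
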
 

\begin{proof}[Proof of \textup{Theorem \ref{intfieldstovanishing}}] A map $g:S^{k+p-1}\to B\bar{\Gamma}_k$ corresponds to a Haefliger structure $\cH$ of codimension $k$ on $S^{k+p-1}$ together with a homotopy class of trivializations of its normal bundle. Actually when using the Milnor model for classifying spaces of topological groupoids (see  Section I.5 of \cite{Hae:integrability}) we have a definite trivialization (see e. g. \cite{Hus}, Chapter 4, Section 9). Changing the Haefliger structure in its homotopy class induces homotopies of the  normal bundle and thus a homotopy of $g$. 

We are only interested in homotopy information. Thus, see for example section 1.9 of \cite{Hae:ouvert}, we may think of $\cH$ as the restriction to the zero section of a foliation $\cG$ 
on the total space $E$ of a smooth rank $k$ vectorbundle $p:E\to S^{k+p-1}$ with $\cG$ transverse to the fibres of $p$. Furthermore, the normal bundle of $\cH$  is the restriction of the normal bundle of the foliation $\cG$ to the zero section. The normal bundle $\nu_\cG$  of $\cG$ is the tangent field to the fibres of $p$, so that the normal bundle of $\cH$ is $p$. Any trivialization of $p$ will extend to a trivialization of $\nu_\cG$. 

Let $M:=S^{k+p-1}\times (0,2)$ be the subset of $\R^{k+p}$ which is the union of $(k+p-1)$ spheres of radius $r\in (0,2)$ with center the origin. Then $M$ is an open manifold, $(E,\cG)$ is a foliated manifold so that the Gromov-Phillips Theorem applies. 

Let $f_0:M\to E$ be given by $f_0(x,t)=x$, $x\in S^{k+p-1}$, $t\in(0,2)$, where we identify $S^{k+p-1}$ with the zero section of $p$, and let $\varphi: E\to S^{k+p-1}\times \R^k$ be the trivialization of the normal bundle given by  our map $g$ into $B\bar{\Gamma}_k$. 

Let $TM=M\times\R^{k+p}$ be the standard framing of the open subset $M$ of $\R^{k+p}$, and consider the bundle epimorphism $\tilde{f}_0:TM\to \nu_\cG$, which maps $(x,v)\in M\times\R^{k+p}$ to $\varphi^{-1}(f_0(x), p_k(v))$, where $p_k:\R^{k+p}\to\R^k$ is the projection onto the first $k$ coordinates.

The Gromov-Phillips Theorem then supplies us with a homotopy $\tilde{f}_t:TM\to \nu_\cG$ of bundle epimorphisms over a homotopy $f_t:M\to E$  starting with $\tilde{f}_0$ over $f_0$ and ending with  $\tilde{f}_1= pr\circ df_1$ where $f_1$ is a smooth map transverse to $\cG$ and $pr: TE\to \nu_\cG$ is the projection map.
 
 Restricting the homotopy $f_t$ to $S^{k+p-1}$ gives us a homotopy of Haefliger structures on $S^{k+p-1}$ from $\cH=f^*_0\cG$ to  $f_1^*\cG$ restricted to $S^{k+p-1}$. The normal bundles of these Haefliger structures are the pull backs of $\nu_\cG$ via $f_t$. Together with the trivialization of $\nu_\cG$ we see that up to homotopy we can assume that the initial Haefliger structure is the one induced from $\cG$ by $f_1$ restricted to $S^{k+p-1}$. But this Haefliger structure is the restriction of an honest foliation $\F$ of $M$ to the unit sphere. So restricting $\F$ to $S^{k+p-1}\times\{1/2\}$ gives us a Haefliger structure over $S^{k+p-1}$ (identified with $S^{k+p-1}\times \{1/2\}$) with a trivialization homotopic as a pair to the original pair. We now restrict the homotopy $\tilde{f}_t$ to $S^{k+p-1}\times\{1/2\}$ going backwards from $t=1$ to $t=0$. The kernel of each of these restrictions of bundle maps $\tilde{f}_t$  gives us a continuous family $\tau_t$ of continuous fields of $p$-planes of $\R^{k+p}$, starting with the restriction $\tau_1$ of the tangent bundle of $\F$ to  $S^{k+p-1}\times\{1/2\}$ and ending with the constant field $\tau_0$ of planes orthogonal to $\R^k=p_k(\R^{k+p})$.  Spreading these fields out over $S^{k+p-1}\times [1/4,1/2]$ by putting $\tau_t$ on $S^{k+p-1}\times \{(1+t)/4\}$ gives us a $p$-plane field that can be extended to all of $D^{k+p}$ by using the tangent field of $\F$ for points with distance at least $1/2$ from the origin and by the constant field orthogonal to $\R^k$ for points in the disk of radius $1/4$. This field can be approximated in its homotopy class by a smooth field $\tau$ which is tangent to $\F$  outside a small neighbourhood of the disk of radius $1/2$, and the constant field orthogonal to $\R^k$ in a neighbourhood of the origin. Notice that the trivialization of the normal bundle of this plane field is on each sphere $S^{k+p-1}\times\{a\}$ with $a$ close to $0$, up to rescaling the radius of the sphere, equal to $\varphi\circ \varphi^{-1}$, i. e. the identity of $S^{k+p-1}\times \R^k$, which extends trivially to the normal bundle of $\tau$ over $D^{k+p}$.
 
 The hypothesis of Theorem \ref{intfieldstovanishing} then gives us a homotopy of $\tau$ to a smooth foliation by a homotopy which is constant in a neighbourhood of the boundary $S^{k+p-1}$ and in a neighbourhood of the origin. Since a foliation is a Haefliger structure and homotopies of plane fields give homotopies of their normal fields, we are done.
\end{proof}

Essentially the same proof
gives the following
\begin{theorem}[Haefliger \cite{Hae:integrability}]\label{thm:connlessk} $B\bar{\Gamma}_k$ is $(k-1)$-connected.
\end{theorem}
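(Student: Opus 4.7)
The plan is to apply Theorem \ref{intfieldstovanishing} with $p=0$. The integrability hypothesis there is vacuous in that case: a $0$-plane field is just the zero section of the rank-$0$ bundle, and it is tautologically the tangent field of the $0$-dimensional foliation by points, so any homotopy between two such fields is constant and the constancy requirement near $K$ is automatic. Hence Theorem \ref{intfieldstovanishing} applied with $p=0$ delivers $\pi_{k-1}B\bar{\Gamma}_k=0$ for free.

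To obtain $\pi_i B\bar{\Gamma}_k=0$ for every $0\leq i\leq k-1$, I would run the proof of Theorem \ref{intfieldstovanishing} verbatim with the sphere $S^{k+p-1}$ replaced by $S^i$. The only adjustment concerns the open manifold $M$ to which Gromov--Phillips is applied: since a bundle epimorphism $TM\to\nu_\cG$ (of rank $k$) can only exist when $\dim M\geq k$, I would take
\[
M:=S^i\times(0,2)\times\R^{k-1-i},
\]
an open manifold of dimension $k$ which reduces to $S^{k-1}\times(0,2)\subset\R^{k}$ in the top case $i=k-1$. One then represents $g\colon S^i\to B\bar{\Gamma}_k$ by a Haefliger structure $\cH$ on $S^i$ with its definite trivialization; realizes $\cH$ as the restriction to the zero section of a codimension-$k$ foliation $\cG$ on the total space $E$ of a rank-$k$ vector bundle $p\colon E\to S^i$; builds an initial epimorphism $\tilde f_0\colon TM\to\nu_\cG$ from the given trivialization of $\nu_\cG$ combined with the projection of $TM$ onto the last $k$ coordinates; and invokes Theorem \ref{thm:grromphil} to homotope $\tilde f_0$ through bundle epimorphisms to $\tilde f_1=\mathrm{pr}\circ df_1$ for some $f_1\colon M\to E$ transverse to $\cG$.

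Pulling $\cG$ back by $f_1$ yields a codimension-$k$ foliation on the $k$-manifold $M$, i.e.\ a smooth $0$-dimensional foliation (a foliation by points). Restricting the resulting homotopy $\tilde f_t$ to the central slice $S^i\times\{1/2\}\times\{0\}\cong S^i$ produces, as in the proof of Theorem \ref{intfieldstovanishing}, a continuous family of $0$-plane fields on $S^i$ interpolating between one compatible with $\cH$ (and its given trivialization) and the tangent field of the smooth foliation $f_1^{\ast}\cG$. Extending this family across the bounding disk $D^{i+1}$ exactly as in Theorem \ref{intfieldstovanishing}, and then invoking the (vacuous) $p=0$ hypothesis to ``fill the hole'', produces a null-homotopy of $g$, whence $\pi_iB\bar{\Gamma}_k=0$.

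The only step that is not entirely automatic is the bookkeeping required to ensure that the Gromov--Phillips homotopy can be arranged to be compatible with the trivialization of $\nu_\cG$ along a neighbourhood of the slice $S^i\times\{1/2\}\times\{0\}$, so that the restricted homotopy really does give a homotopy of Haefliger structures \emph{with trivialized normal bundle}; this is the main, and essentially only, obstacle, but it is handled exactly as in the corresponding step of the proof of Theorem \ref{intfieldstovanishing}.
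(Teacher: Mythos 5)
Your proposal is correct and matches the paper's own proof essentially line for line: you apply Theorem~\ref{intfieldstovanishing} with $p=0$, observe that the integrability hypothesis is then vacuous, and handle the lower homotopy groups by replacing $M$ with $S^i\times(0,2)\times\R^{k-1-i}$ (the paper's $S^{i-1}\times(0,2)\times\R^{k-i}$ under a shift of index), taking the initial bundle epimorphism from the given trivialization of $\nu_\cG$. One trivial quibble: since $\dim M=k$ here, $TM$ already has rank $k$, so there is no ``projection onto the last $k$ coordinates''—the bundle map is just $v\mapsto\varphi^{-1}(x,v)$, as in the paper—but this does not affect the argument.
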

 \begin{proof} Here we start with a Haefliger structure on $S^{i-1}$ with $i\leq k$. Take now for $M$ the open subset $(S^{i-1}\times (0,2))\times\R^{k-i}$ in $\R^k$. Then the initial bundle epimorphism $TM\to \nu_\cG$ is given by $((x,t,y),v)\mapsto \varphi^{-1}(x,v)$, $x\in S^{i-1}$, $t\in (0,2)$, $y\in\R^{k-i}$, $v\in \R^k$. Then everything proceeds as before and with $p=0$. For $p=0$ the hypothesis of \ref{intfieldstovanishing} is trivially satisfied.
 \end{proof}
 
\begin{remark}
Theorem \ref{thm:main} is not used in the proof of Theorem \ref{thm:connlessk} and in the verification of the hypothesis  of Theorem \ref{intfieldstovanishing} for $p$ equal to $0$ and $1$. Thus $B\bar{\Gamma}_k$ is $k$-connected for any $k$. Theorem \ref{thm:main} shows that for $p=2$ the hypothesis of Theorem \ref{intfieldstovanishing} holds if $k\geq2$. In fact, for $k=1$, even in the very special situation that we would encounter in a  proof along the lines described above the hypothesis is not satisfied: not every smooth $2$-plane field on the open disk of radius $2$ in $\R^3$ which is a foliation in a neighbourhood of $S^2$ is homotopic to a smooth foliation by a homotopy which is constant in a neighbourhood of $S^2$. The proof that $\pi_2B\bar{\Gamma}_1=0$ is in \cite{Ma:integr} based on the Thurston-Mather Theorem for codimension 1 foliations, the proof of which is the main result of \cite{Ma:integr}.
\end{remark}

\vspace*{5pt}

\noindent
Yoshihiko MITSUMATSU\\
{\it Department of Mathematics,  
Chuo University\\
1-13-27 Kasuga, Bunkyo-ku, 
Tokyo, 113-8551, Japan
\\
E-mail address: }{\tt yoshi@math.chuo-u.ac.jp}
\vspace{10pt}
\\
Elmar VOGT\\
{\it Mathematisches Institut, 
Freie Universit\"at Berlin\\
Arnimallee 7, 14195 Berlin, Germany
\\
E-mail address: }{\tt evogt@zedat.fu-berlin.de}

\end{document}